\documentclass[reqno,oneside, 11pt]{amsart}
 
 \usepackage{amsfonts}
\usepackage{amsmath}
\usepackage{amssymb}
\usepackage[applemac]{inputenc}
\usepackage{url}
\usepackage{hyperref}
\usepackage{color}

 \usepackage{url} 
 \usepackage{hyperref}

%%%%%%%%%%%%%%%%%%%%%%%%%%%%%%%%%%%%%%%%%%%%%%%%%%%%%%%%%%%%%%%%%%
%  Fichier tex pour l'article :
%
% classification ams :
%
% Keywords :
%
%
%%%%%%%%%%%%%%%%%%%%%%%%%%%%%%%%%%%%%%%%%%%%%%%%%%%%%%%%%%%%%%%%%%
%
\usepackage[T1]{fontenc}
\usepackage{times,mathptm}
\usepackage{amssymb,epsfig,verbatim,xypic}
%
%
%%%%%%%%%%%%%%%%%%%%%%%%%%%%%%%%%%%%%%%%%%%%%%%%%%%%%%%%%%%%%%%%%%
%
\theoremstyle{plain}

\newtheorem{theorem}{{Theorem}}[section]
%[section]
\newtheorem{isom.ext}[theorem]{{Trivial isometric extension}}%[section]
%[section]
%[section]
\newtheorem{lemma}[theorem]{{Lemma}}%[section]
\newtheorem{corollary}[theorem]{{Corollary}}%[section]
\newtheorem{fact}[theorem]{{Fact}}%[section]
\newtheorem{remark}[theorem]{{Remark}}%[section]
\newtheorem{remarks}[theorem]{{Remarks}}%[section]
%[section]
%[section]
%[section]
%[section]

%[section]
\newtheorem{CONJ}[theorem]{{Projective Lichnerowicz conjecture}}%[section]
\newtheorem*{Kaehler}{{Theorem ({\bf Rigidity of $h$-projective transformation groups)}}}%[section]
%\newtheorem{CONJ}{{ Projective Lichnerowich conjecture}}%[section]

%%%%%%%%%%%%%%%%%%%%%%%%%%%%%%%%%%%%%%%%%%%%%

%%%%%%%%%%%%%%%%%%%%%%%%%%%%%%%%%%
%   Lettres
%%%%%%%%%%%%%%%%%%%%%%%%%%%%%

\def\C{\mathbb{C}}
\def\R{\mathbb{R}}

\def\Z{\mathbb{Z}}

\def\la{\lambda}

\def\Iso{\sf{Iso}}
\def\Pro{\sf{Proj}}
\def\Aff{\sf{Aff}}

 \def\Jac{\sf{Jac}\;}

\def\L{\mathcal{L}}

\def\SS{\mathcal S}

%%%%%%%%%%%%%%%%%%%%%%%%%%%%%%%%%%%%%%%%%%
 
%%%%%%%%%%%%%%%%%%%%%%%%%%%%%%%%%%%%%%%%%%

\def\Diff{{\sf{Diff}}}

%%%%%%%%%%%%%%%%%%%%%%%%%%%%%%%%%%%%%%%%%%%%%%%%%%%%%%%%%%%%%%%%%
%   Abbrev.
%%%%%%%%%%%%%%%%%%%%%%%%%%%%%%%%%%%%%%%%%%%%%%%%%%%%%%%%%%%%%%%%%

\def\PGL{{\sf{PGL}}}

\def\GL{{\sf{GL}}}
\def\SO{{\sf{SO}}}

\def\SU{{\sf{SU}}}

\def\SL{{\sf{SL}}}

\def\End{{\sf{End}}}

\def\Mat{{\sf{Mat}}}

\def\Aff{{\sf{Aff}}}
\def\SU{{\sf{SU}}}

\def\det{{\sf{det}}}

%
%%%%%%%%%%%%%%%%%%%%%%%%%%%%%%%%%%%%%%%%%%%%%%%%%%%%%%%%%%%%%%%%%%
%

\setlength{\textwidth}{14.1cm}                       %{12.8cm}
\setlength{\textheight}{22.5cm}                     %{20.0cm}
\setlength{\topmargin}{0.20cm}                     %{0.4cm}
\setlength{\headheight}{0.3cm}                     %{0.6cm}
\setlength{\headsep}{0.6cm}                         %{0.8cm}
\setlength{\oddsidemargin}{1.6cm}                %{1.2cm}  
\setlength{\evensidemargin}{1.6cm}              %{1.2cm}

%%%%%%%%%%%%%%%%%%%%%%%%%%%%%%%%%%%%%%%%%%%%%%%%%%%%%%%%%%%%%%%%%%
%%%%%%%%%%%%%%%%%%%%%%%%%%%%%%%%%%%%%%%%%%%%%%%%%%%%%%%%%%%%%%%%%%
%
\addtocounter{section}{0}             % Start with section 1
\numberwithin{equation}{section}       % Number formulas within sections
%%%%%%%%%%%%%%%%%%%%%%

\begin{document}

\setlength{\baselineskip}{0.53cm}   % Previous 0.54      % Previous 0.56
%
%%%%%%%%%%%%%%%%%%%%%%%%%%%%%%%%%%%%%%%%%%%%%%%%%%%%%%%%%%%%%%%%%%
%
\title[Projective group]
{On  discrete       projective transformation groups  of Riemannian manifolds}
\date{ \today}
\author{Abdelghani Zeghib \\
${}$ \\
 } 
\address{CNRS \\
UMPA \\
\'Ecole
Normale Sup\'erieure de Lyon\\
France}
\email{abdelghani.zeghib@ens-lyon.fr
\hfill\break\indent
 \url{http://www.umpa.ens-lyon.fr/~zeghib/}}
 \thanks{Partially supported by the ANR project GR-Analysis-Geometry}

\begin{abstract} 

 We prove rigidity facts for groups acting on pseudo-Riemannian manifolds  by preserving 
 unparameterized geodesics.

\vspace{0.35cm}

\noindent{\sc{R\'esum\'e.}} Nous d\'emontrons des r\'esultats de rigidit\'e pour les  groupes
agissant sur des vari\'et\'es pseudo-riemanniennes en pr\'eservant leurs g\'eod\'esiques non-param\'etr\'ees. 

\end{abstract}

\maketitle

\setcounter{tocdepth}{1}

\section{Introduction}

\subsubsection{The projective group of a connection} Two linear connections $\nabla$ and $\nabla^\prime$ on a manifold $M$ are 
equal iff they have the same (parameterized) geodesics. They  are called projectively equivalent if they have the same unparameterized geodesics. This is equivalent to that the difference $(2,1)$-tensor $T = \nabla - \nabla^\prime$ being trace free in a natural sense \cite{Eastwood}.

The affine group $\Aff(M, \nabla)$ is that of transformations preserving $\nabla$ and the projective one $\Pro(M, \nabla)$ is that of transformations $f$ sending $\nabla$ to a
projectively equivalent  one. So, elements of $\Aff$ are those preserving (parameterized) geodesics and those of $\Pro$ preserve 
unparameterized geodesics.

Obviously $\Aff \subset \Pro$; and it is natural to look for special connections for which this inclusion is proper, that is when projective non-affine transformations exist?

\subsubsection{Case of Levi-Civita connections} Let now $g $ be a Riemannian metric on $M$ and $\nabla$ its Levi-Civita connection. The affine 
and projective groups $\Aff(M, g)$ and $\Pro(M, g)$ are those associated to $\nabla$. 

More generally, $g$ and $g^\prime$ are projectively equivalent if so is the case for their associated Levi-Civita connections. This defines an equivalence relation on the space $\sf{Riem}(M)$ of Riemannian metrics on $M$.  Let $\mathcal P(M, g)$ denote the class of $g$, i.e. the set of metrics shearing the same unparameterized geodesics with $g$. It contains $\R^+. g$, the set of constant multiples of $g$. Generically, $\mathcal P(M, g) = \R^+.g$.  

One crucial fact here is that $\mathcal  P(M, g)$ is always a finite dimensional manifold whose dimension is called the {\it degree of projective mobility} of $g$.
  (This contrasts with the case of projective equivalence classes of connections which are infinitely dimensional affine spaces. Similarly, conformal classes of metrics are identified to spaces of positive functions  on the manifold). It is actually one culminate fact of projective differential geometry to identify   $\mathcal P(M, g)$ to an open 
  subset of a finite dimensional  linear sub-space  $\L(M, g)$ of endomorphisms of $TM$ (see  \S \ref{linear}). Being projectively equivalent for connections is a linear condition, but this is no longer linear for metrics (say because the correspondence $g  \to$ its Levi-Civita connection, is far from being linear!).  The trick is to perform a transform leading to a linear equation, see \cite{Bol-Mat} for a nice exposition.

\subsubsection{Philosophy} The idea  behind our approach here is to let a diffeomorphism
$f$ on a  differentiable manifold $M$ act on the space $\sf{Riem}(M)$ of Riemannian metrics 
on $M$. That this   action has a fixed point means exactly that  $f$ is an isometry for some 
Riemannian metric on $M$. One then naturally wonder what is the counterpart of
the fact that the $f$-action preserves some (finite dimensional) manifold $V \subset \sf{Riem}(M)$. 
A classical similar idea is to let the  isotopy class of a diffeomorphism on a surface act on  its Teichmuller space \cite{Thurston}.
Here, as it will be seen bellow,  we are 
  specially concerned with the case $\dim V = 2$.

\subsubsection{More general pseudo-Riemannian framework} All this generalizes to the  pseudo-Riemannian case. One fashion to unify all  
is to generalize all this  to the wider framework  of  second order ordinary differential equations (e.g.   hamiltonian systems) on $M$, by letting their solutions 
playing the role of (parameterized) geodesics.

\subsubsection{Rigidity  of the projective group} We are interested here in a (very) natural and classical problem
in differential geometry: {\it Characterize pseudo-Riemannian manifolds $(M,g)$ for which 
$\Pro (M, g) \supsetneqq \Aff (M, g)$, that is $M$ admits an essential projective transformation?  }  Constructing  upon a long research history by many people (see for instance \cite{Mat1, Mat2}), 
  we dare formulate more precisely:

\begin{CONJ} Let $(M, g)$ be a compact 
pseudo-Riemannian manifold. Then, unless $(M, g)$ is a finite quotient of the standard
Riemannian sphere,  \\
$\Pro(M, g)/ \Aff(M, g)$ is finite. 

--- Same question when compactness is replaced by completeness (this does not contain the first case since general   pseudo-Riemannian non-Riemannian compact manifolds 
may be non-complete).

\end{CONJ}

\subsubsection{Gromov's vague conjecture} It states that rigid geometric structures having a large automorphism group, are classifiable \cite{Gro1, Gro2}!  This needs a precise experimental (realistic) formulation for each geometric structure.  Our question above is an optimistic 
 formulation in the case of metric projective connections (those which are of Levi-Civita type). The historical case was that of Riemannian conformal 
structures with a precise formulation, as in the projective case above with the sphere playing a central role, is generally attributed to Lichnerowicz, and solved by J. Ferrand \cite{Ferrand,  Obata}. In the general conformal  pseudo-Riemannian case,  there are many ``Einstein universes'', i.e.  conformally  flat examples with an essential conformal group. 
A Lichnerowicz type conjecture  would  be that all pseudo-Riemannian manifolds with an  essential conformal group are conformally flat. 
 However,  this was recently invalidated  by C. Frances \cite{Frances}. In the projective case, 
there is no natural candidate of a compact pseudo-Riemannian  (non-Riemannian) manifold 
playing the role Einstein universes; 
it becomes a natural 
challenge to prove that indeed $\Pro/ \Aff$ is always finite in this situation?

 In the vein of this vague conjecture, it is surely interesting  to see to automorphism groups of non-metric projective connections...

\subsection{Results} This   very classical subject of differential geometry was  specially investigated by the Italian and next the Soviet  schools. 
 All  famous names: Beltrami, Dini, Fubini, Levi-Civita  are still involved in results  on  projective equivalence of metrics \cite{Beltrami, Dini, Levi}. As for the ``Soviet'' side,  let us quote o \cite{Aminova1, Aminova2, Sin, Mat1, Mat2, Solo1, Solo2}, and as names 
 Solodovnikov who ``introduced'' the projective group problem, and last 
 V. Matveev, who handled  many remarkable cases of it.

\subsubsection{Killing fields variant} 
Actually, it was  $\Pro^0(M, g)$, 
 the identity component of $\Pro(M, g)$,  that got real  interest in the literature. Its elements are those 
 belonging to flows of projective Killing fields.  There is a prompt formulation of the Lichnerowicz  conjecture here: {\it if $\Pro^0(M, g) \supsetneqq \Aff^0 (M, g)$, then
 $(M, g)$ is covered by the standard sphere} (assuming $M$ compact). 
 
 This identity component variant was proved by V. Matveev in the case of Riemannian manifolds \cite{Mat1}, and remains   open in the case of higher signature.
 
 Local actions, i.e. projective Killing fields with flows defined only locally, were also considered, see for instance \cite{Bryant}.
 
  However, situations with no Killing fields involved, say for example when $\Pro$ is a discrete group do not seem to be studied. We think it is worthwhile to consider them because the discrete part may have dynamics stronger than the connected one, as in the case of a flat torus $\mathbb T^n$, but in fact for its affine group whose discrete part is the beautiful arithmetic (the best!)  group  $\SL_n(\Z)$.
 
 \subsubsection{Non dynamical variant} Without actions, one may think of having big $\mathcal P(M, g)$ as an index of symmetry, and one naturally may ask when this happens.  For this, as in the projective case, consider $\mathcal A(M, g)$,  the set 
 of metrics  affinely equivalent to $g$ (i.e. having the same Levi-Civita connection). 
   Here, we have the following wonderful  theorem:
 
 \begin{theorem} [Kiosak, Matveev, Mounoud, see \cite{Mat2}]  \label{higher.rank} Let $(M, g)$ be a compact pseudo-Riemannian manifold.  If $\dim \mathcal P(M, g) \geq 3$, 
 then $\mathcal P(M, g)$= $\mathcal A (M, g)$, unless $(M, g)$ is  covered by the standard Riemannian sphere. In particular
  $\Pro(M, g) = \Aff(M, g)$ in this case.
 
 \end{theorem}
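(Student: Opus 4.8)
The plan is to reduce everything to the linearization of projective equivalence recalled in \S\ref{linear}. A metric $\bar g$ is projectively equivalent to $g$ exactly when the associated $g$-selfadjoint endomorphism $L$ of $TM$ (built from $\bar g$ and the ratio of the two volume densities) solves the linear first order system
\begin{equation}
\nabla_k L_{ij} = \lambda_i g_{jk} + \lambda_j g_{ik}, \qquad \lambda_i = \tfrac12\, \nabla_i \tr(L),
\end{equation}
the solution space of which is precisely the linear space $\L(M,g)$, with $\dim \L(M,g) = \dim \mathcal P(M,g)$. Affine equivalence, i.e. $\mathcal A(M,g)$, corresponds to the $\nabla$-parallel solutions, namely those $L$ with $\lambda \equiv 0$. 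Thus the equality $\mathcal P(M,g) = \mathcal A(M,g)$ is the purely linear assertion that every solution $L$ has $\lambda \equiv 0$, and the theorem becomes: if $\dim \L(M,g) \ge 3$ and some solution has $\lambda \not\equiv 0$, then $(M,g)$ is covered by the round sphere. This is what I would set out to prove.

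The next step is to prolong the system. Differentiating the equation above and inserting the Ricci identity closes it into a linear connection on the bundle $L \oplus \lambda \oplus \mu$, where $\mu$ is a scalar produced by a further contraction, of the schematic shape
\begin{equation}
\nabla_i \lambda_j = \mu\, g_{ij} + \Phi(L)_{ij}, \qquad \nabla_i \mu = B\, \lambda_i + \psi(L)_i,
\end{equation}
with $\Phi, \psi$ built from the curvature of $g$ and $L$. The decisive feature is a scalar $B$ attached to each solution which, by connectedness of $M$, is forced to be a genuine constant. Everything then hinges on $B$: along a non-parallel solution the scalar $\mu$ is non-constant, and the prolongation makes it satisfy a second order equation of Obata type, $\nabla_i\nabla_j \mu = B\,\mu\, g_{ij}$, once the $\Phi, \psi$ corrections are absorbed in the relevant combination.

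The core of the proof is then a case analysis on the sign of $B$, using compactness of $M$. For one sign of $B$ the equation $\nabla^2\mu = B\mu g$ is exactly the Obata equation with a nonconstant solution, and the classical Obata theorem identifies $(M,g)$ with the standard round sphere; for the opposite sign, integrating $\nabla^2\mu = B\mu g$ over the closed manifold (equivalently, examining $\mu$ at its extrema via a Bochner/maximum principle argument) forces $\mu \equiv 0$ and hence $\lambda \equiv 0$, so no non-affine solution can exist and $\mathcal P = \mathcal A$. The hypothesis $\dim \mathcal P(M,g) \ge 3$ enters precisely here: the pencil of solutions carries a natural quadratic form whose values are the constants $B$ of its members, and having at least three independent solutions is what allows a linear-algebra argument to isolate a solution for which $B$ has a definite, exploitable sign. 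That the threshold is genuinely $3$ is consistent with the fact that in degree of mobility exactly $2$ there are honest compact non-affine examples, so no such argument can be available there.

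The step I expect to be the main obstacle is the pseudo-Riemannian signature. Both ends of the dichotomy --- the Bochner/maximum principle when $B$ has the wrong sign, and Obata rigidity when it has the right one --- are classically statements about \emph{Riemannian} metrics and are false verbatim in indefinite signature. The real content is therefore to show that the mere existence of a non-parallel solution on a \emph{compact} manifold with $\dim \mathcal P \ge 3$ forces $g$ to be positive definite of constant positive curvature; this is why the exceptional case in the statement is the genuinely Riemannian sphere rather than some indefinite model. Concretely I would study the gradient flow of $\mu$ and the geometry of its level sets dictated by $\nabla^2\mu = B\mu g$, aiming to prove that $g$ restricts definitely to the leaves and, by completeness together with compactness, that the whole manifold is the standard sphere. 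Controlling the signature along this flow, rather than any isolated computation, is where I anticipate the difficulty to concentrate.
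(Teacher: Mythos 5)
First, a point of calibration: the paper contains no proof of Theorem~\ref{higher.rank} at all --- it imports the statement from Kiosak--Matveev and Matveev--Mounoud \cite{Kiosak, Mat2} --- so your attempt can only be measured against the published proofs. Against those, your skeleton is the right one: the Sinjukov linearization (exactly the theorem recalled in \S\ref{linear}), with $\mathcal A(M,g)$ corresponding to parallel solutions; prolongation of the metrizability equation to a closed first-order system in $(L,\lambda,\mu)$ with a constant $B$; and a sign dichotomy ending in Obata-type rigidity. As a road map this is accurate, and your instinct that the pseudo-Riemannian signature is where the real content lies is also correct.

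The gap is that the two decisive steps are asserted rather than proved, and the first is actually misstated. (a) The hypothesis $\dim \mathcal P(M,g)\geq 3$ is \emph{not} used to ``isolate a solution for which $B$ has a definite, exploitable sign.'' For a single solution the prolongation does not close up into your schematic system with $B$ constant: when the degree of mobility is exactly $2$ no such constant exists in general, which is consistent with your own observation that compact non-affine mobility-$2$ examples exist. What $\dim\geq 3$ really buys, via a pointwise linear-algebra argument applied to triples of independent solutions, is a \emph{single} constant $B$, common to the entire solution space, such that every solution $(L,\lambda)$ satisfies the extended system
\[
\nabla_i \lambda_j = \mu\, g_{ij} + B\, L_{ij}, \qquad \nabla_i \mu = 2B\,\lambda_i ,
\]
i.e.\ your $\Phi(L)=B L$ and $\psi=0$ with one $B$ for all solutions simultaneously; this uniformity, not a choice of well-signed member of the pencil, is the engine of the proof in \cite{Kiosak}. (b) Having flagged that Obata and Bochner arguments fail in indefinite signature, you leave the resolution as a plan (``study the gradient flow of $\mu$ and its level sets''), but that step \emph{is} the theorem: showing that a nonconstant solution of the resulting Obata/Gallot--Tanno equation on a \emph{closed} pseudo-Riemannian manifold forces the metric to be (up to sign) Riemannian of constant positive curvature is precisely the main result of \cite{Mat2}, and it is proved there not by a gradient-flow/level-set analysis --- the level sets of $\mu$ can be degenerate and compactness does not give geodesic completeness in indefinite signature --- but by analyzing the scalar ODEs that solutions satisfy along geodesics, in particular lightlike ones. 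So what you have is the correct strategy with its two pivotal ingredients --- uniformity of $B$ from mobility $\geq 3$, and the pseudo-Riemannian Gallot--Tanno rigidity --- still missing.
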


\subsubsection{Rank 1 case?} In view of this, it remains to consider the case $\dim \mathcal P(M, g) = 2$ (the dimension 1 case is trivial).   Actually, this case occupies a large part in proofs of Lichnerowicz conjecture in the Riemannian 
as well as K\"ahlerian cases \cite{Mat1, Mat3, Mat4}. 
(We think our approach here, besides it treats the discrete case,  also  simplifies these existing proofs).  We are not surprized of 
the resistance of this  case, reminiscent to a rank 1 phenomena, vs the higher rank case. Assuming 
$\dim \mathcal P(M, g) \geq 3$ hides a symmetry abundance hypothesis!

Anyway, in all our proofs, we will assume $\dim \mathcal P(M, g) = 2$.

\subsubsection{Aim} Our first objective here is to  provide a proof of the above conjecture   in case 
of compact Riemannian manifolds

\begin{theorem} \label{Riemannian} Let $(M, g)$ be a compact Riemannian manifold.  If 
 $M$ is not a Riemannian finite quotient of a standard sphere, 
and $\Pro(M, g) \supsetneq Aff(M,g)$, then $\Pro(M, g) $ is a finite extension of $\Aff(M, g)$. 

More precisely, 
  $\Aff(M, g) = \Iso(M, g)$, and a subgroup $\Iso^\prime(M, g)$ of index $\leq 2$, is normal 
  in $\Pro(M, g)$, and the quotient group 
 $\Pro(M, g) / \Iso^\prime(M, g)$ is  either  cyclic of order  $\leq  \dim M$, or dihedral of order $\leq 2 \dim M$.

\end{theorem}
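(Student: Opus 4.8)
The plan is to make $\Pro(M,g)$ act linearly on the plane $\L(M,g)$ and to recognise the quotient inside $\GL_2(\R)$. Three preliminary reductions clear the ground. First, since $(M,g)$ is compact Riemannian, the classical theorem that every affine transformation of a compact Riemannian manifold is an isometry gives $\Aff(M,g)=\Iso(M,g)$, a compact group. Second, as $M$ is not covered by the round sphere, Matveev's Riemannian identity--component theorem forbids non--isometric projective Killing fields, so $\Pro^0(M,g)=\Iso^0(M,g)$ and all essential projective transformations are discrete modulo isometries. Third, by Theorem~\ref{higher.rank} I may assume $\dim\mathcal P(M,g)=2$, the cases $\dim\mathcal P=1$ and $\dim\mathcal P\ge 3$ giving $\Pro=\Aff$ directly; thus $\L=\L(M,g)$ is a plane, $g$ corresponds to $\mathrm{Id}\in\L$, and $\mathcal P(M,g)\subset\L$ is the open set of Riemannian metrics projectively equivalent to $g$.

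Each $f\in\Pro$ permutes the metrics projectively equivalent to $g$, hence preserves $\mathcal P$ and acts linearly on $\L$; this defines $\rho\colon\Pro\to\GL(\L)=\GL_2(\R)$. Its kernel fixes the element of $\L$ attached to $g$, so $f^*g=g$ and $\ker\rho\subseteq\Iso$. Moreover $\rho(\Iso^0)$ is a connected compact subgroup of $\GL_2(\R)$ fixing the nonzero vector $\mathrm{Id}$; since a nontrivial connected compact subgroup is a circle, which fixes no nonzero vector, $\rho(\Iso^0)=1$, and therefore $\rho(\Pro^0)=1$ as well. I set $\Iso'(M,g):=\ker\rho$; it is normal in $\Pro$ and contains $\Iso^0$. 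As $\rho(\Iso)$ is a compact group fixing $\mathrm{Id}$, after averaging an inner product it lies in the stabiliser of a nonzero vector inside $\mathsf O(2)$, a group of order $\le 2$; hence $[\Iso:\Iso']\le 2$, as claimed, and it remains only to analyse $\rho(\Pro)\cong\Pro/\Iso'$.

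To control $\rho(\Pro)\subset\GL_2(\R)$ I would use the functional $V(\sigma)=\int_M(\det A_\sigma)^{-(n+1)/2}\,d\mathrm{vol}_g$ (with $n=\dim M$), which is $\rho(\Pro)$--invariant because $f$ acts on metrics by pull--back and the total volume is a diffeomorphism invariant. On $\mathcal P$ the functional $V$ is finite, positive, and homogeneous of the negative degree $-n(n+1)/2$; comparing $V$ along an orbit $\{\rho(f)^k\sigma\}$, whose points all lie in $\mathcal P$ with the same value of $V$, forces the eigenvalues of every $\rho(f)$ to have modulus one, and, via a separate look at the global extrema over the compact $M$ of the eigenvalue functions of $A_\sigma$, shows that \emph{real} eigenvalues can only be $\pm 1$. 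Ruling out nontrivial unipotent (shear) elements the same way, $\rho(\Pro)$ consists of semisimple elements with unit--modulus spectrum, hence is relatively compact and conjugate into $\mathsf O(2)$. Since $\rho(\Pro^0)=1$, the image is discrete, so it is a \emph{finite} subgroup of $\mathsf O(2)$, i.e. cyclic or dihedral. Finally, a generator of the cyclic part is a rotation preserving the degree--$n$ ``determinant form'' $\sigma\mapsto\det A_\sigma(x)$, hence it permutes the at most $n$ directions in which $A_\sigma$ degenerates (equivalently the at most $n$ eigenvalue functions of $A_\sigma$); its order is therefore at most $n=\dim M$, and the extra reflection, when present, yields a dihedral group of order at most $2\dim M$. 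I expect the crux to be exactly this last step: proving \emph{finiteness} of $\rho(\Pro)$ — excluding an irrational rotation, which is where compactness of $M$, the invariance of $V$, and the exclusion of the round sphere (the only case with a fully rotation--symmetric determinant form) all enter — and extracting the sharp bound $\dim M$ from the spectral geometry of $A_\sigma$.
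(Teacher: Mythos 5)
Your proposal has the right skeleton (reduce to mobility $2$, linearize on the plane $\L(M,g)$, analyse the image of $\rho$ in $\GL_2(\R)$), and your elliptic endgame --- a rotation must permute the finitely many (at most $\dim M$) degenerate directions of $\mathcal D$, whence finite order $\leq \dim M$ --- is exactly the paper's argument in \S\ref{non.hyperbolic}; note in passing that this sector argument is also what rules out an irrational rotation, not the (false) inference ``$\rho(\Pro^0)=1$ implies $\rho(\Pro)$ discrete'': an infinite cyclic quotient could a priori map onto a dense subgroup of $\SO(2)$. But there is a fatal gap at the step you yourself flag as the crux: you claim that invariance of the functional $V(\sigma)=\int_M (\det A_\sigma)^{-(n+1)/2}dv_g$ ``forces the eigenvalues of every $\rho(f)$ to have modulus one,'' i.e.\ excludes hyperbolic elements by a soft compactness argument. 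This cannot work as stated, because the round sphere carries the same invariant functional \emph{and} genuine hyperbolic projective transformations; the level sets of $V$ are non-compact (they are asymptotic to the cone $\mathcal D$), and a hyperbolic element whose eigendirections sit on $\partial\mathcal D$ preserves them perfectly well. Any proof must therefore make the hypothesis ``$M$ is not a quotient of the sphere'' do real geometric work in the hyperbolic case. The paper does this in \S\ref{hyperbolic}: a hyperbolic $\rho(f)$ produces a Lyapunov splitting $TM=E_-\oplus E_\lambda\oplus E_+$ from the eigenspaces of $K_f$, explicit contraction/expansion rates $\zeta_\pm,\zeta_\lambda$, Jacobian estimates against finite volume (Lemma \ref{lemma} and Corollary \ref{cor}), and then a vanishing argument for the projective Weyl tensor $W$ by comparing collapsing rates; constant curvature follows in dimension $\geq 3$, and since Euclidean and hyperbolic space forms admit no essential projective transformations, $M$ would be a sphere quotient --- the excluded case. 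Dimension $2$ (where $W$ is vacuous) needs yet another argument, via Dini's normal form and a warped-product suspension, which your proposal does not foresee at all.

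A second, smaller error: there is no ``classical theorem that every affine transformation of a compact Riemannian manifold is an isometry'' --- the flat torus with $\SL_n(\Z)\subset\Aff$ is a counterexample, cited in this very paper's introduction. The equality $\Aff(M,g)=\Iso(M,g)$ is instead derived from the mobility-$2$ structure: for $h\in\Aff$, the tensor $K_h$ is parallel with constant eigenvalues, hence proportional to $I$ (otherwise $\{K_h,I\}$ would span $\L(M,g)$ and force the chosen essential $f$ to be affine), and then positivity of $S_h$ together with finite volume gives $K_h=I$. Your kernel analysis and the index-$\leq 2$ statement via $\rho(\Iso)\subset\mathsf{O}(2)$ fixing $I$ are fine and match the paper's $\Iso^{(2)}$ discussion, and your parabolic exclusion can be repaired through the invariant set $\mathcal D$ (a parabolic's only bounded invariant subset of $\R$ is its fixed point, forcing $K_h\propto I$); but as written the proposal proves the theorem only in the non-hyperbolic regime, which is the easy half.
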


\subsection*{Examples} In order to illustrate the non-linear character of projective equivalence, let us recall the Dini's 
classical 
result: two metrics on a surface are projectively equivalent, iff, at a generic point, they have the following forms in some coordinate system:
   $$g= (X(x) -Y(y) (dx^2 + dy^2), \; \; \bar{g} = (\frac{1}{Y(y)} - \frac{1}{X(x)})(\frac{dx^2}{X(x)} + \frac{dy^2}{Y(y)})$$
It follows that for   the metric $g = (a(x) - \frac{1}{a(y)}) ( \sqrt{ a(x)} dx^2 + \frac{1}{\sqrt{a(y)}} dy^2)$, the involution $(x, y) \to (y, x)$ is projective.  This example given by V. Matveev \cite{Mat1},  shows that non-affine projective transformations may exist (outside
the case of spheres) but are not in the identity component, because of  his result. Theorem \ref{Riemannian} says that
the ``discrete projective transformation group'' is always finite, but we do not know
examples more complicated than the last involution.

\begin{remark} Some of quoted results are also true in the complete non-compact case, but we 
consider here compact manifolds, only.
 
\end{remark}

\subsection{K\"ahler version}   \label{Kaehler}  Let $(M, g)$ be a Hermitian manifold. Let $V \subset M$ be a geodesic surface which is at  the same time
a holomorphic curve.   If $g$ is K\"ahler, then  any (real) curve $c$ in $V$ satisfies that its complexified tangent direction is parallel; it is  therefore 
called {\it $h$-planar}.  It is very special that such $V$ exists, but $h$-planer curves   always exist. 
Two  K\"ahler  metrics are $h$-projectively equivalent if they share the same $h$-planer curves. A holomorphic diffeomorphism  $f$ is {\it $h$-projective} if $f_*g$ is $h$-projectively equivalent to $g$. Their group is denoted $\Pro^{Hol}(M, g)$. (There exist equivalent  terminologies for $h$-projective, as holomorphic-projective or $c$-projective).

This holomorphic side of the projective transformation problem 
was classically investigated  by the Japanese school \cite{Has, Ishi, Yano, Yoshi}. 

Finally, 
V.  Matveev and  S.  Rosemann  generalize all known Riemannian results (on the identity component) to   the  K\"ahler  case \cite{Mat3}.  That is, 
if  $\Pro^{Hol}(M, g)$  
contains a one parameter group of non-affine transformations, then, up to a scaling, 
$(M, g)$  is holomorphically isometric to  $\mathbb P^d (\C)$ endowed with its standard metric (where 
$d = \dim_\C M$).

Like in the Riemannian case, we are able here to handle the discrete part of  $\Pro^{Hol}$:

\begin{Kaehler} Let $(M^d, g)$ be a  compact K\" ahler manifold. If 
 $\Aff^{Hol}(M, g)$  has not finite index in  $\Pro^{Hol}(M, g)$, then, 
 up to 
 a scaling,
$(M, g)$ is holomorphically isometric to $\mathbb P^d(\C)$ endowed with its Fubini-Study metric $g_{SF}$.
\end{Kaehler}

\subsubsection*{About the proof} 
V.  Matveev and  S.  Rosemann proved their K\" ahler  identity component version by showing that  
all the differential geometric tools developed in the (usual Riemannian)  projective case, may be adapted to the $h$-projective one, and 
  enjoy all the needed   properties, see \cite{Mat3} for details. 
  Thanks  to this, we will not give details of proof in the $h$-projective case, because it goes exactly as  in the (usual) projective one. Instead, we investigate the following new aspects in the K\" ahler case, in particular to in order to elucidate another use of the word "projective''!

\subsubsection*{Projective vs projective} Recall that a complex manifold $M$ is called projective if it is holomorphic to 
a (closed regular) complex submanifold of some projective space $\mathbb P^N\C)$. Endowed with the restriction of 
 $g_{SF}$,  $(M, {g_{SF}}_{\vert M})  $ is a K\"ahler manifold. However, only few (other) K\"ahler metrics $(M, g)$ admit holomorphic isometric embedding in  a projective space (but, of course real analytic isometric embedding exist, by Nash Theorem).  The dramatic example is that of an elliptic curve, that is a 2-torus with a complex structure. It admits a large  space of holomorphic embedding in projective spaces of different  dimensions, but the induced metric on them can never be flat!  This is one case of a ``Theorema Egregium'' due to Calabi \cite{Calabi} which says that holomorphic isometric immersions in space forms of  constant holomorphic sectional curvature, are absolutely  rigid (see \S \ref{K\"ahler.case}).

\begin{theorem}  \label{K\"ahler} Let $(M^d, g)$ be a  complex submanifold of a projective space $\mathbb {P}^N(\C)$ endowed with the induced metric
(from the normalized Fubini-Study).  Then the group $\Pro^{Hol}(M, g)$ of holomorphic projective transformations  is a finite extension 
of $\Iso^{Hol}(M, g)$, its  group of holomorphic isometries, unless $(M, g)$ is holomorphically homothetic  to $\mathbb P^d(\C)$.  More precisely,  up to composition with $\SU(1+N)$, 
$M $ is the image of a Veronese map: $v_k: \mathbb P^d(\C) \to \mathbb P^N(\C) $ (which expands  the metric by a factor $k$).

\end{theorem}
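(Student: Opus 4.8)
The plan is to transport the proof of Theorem~\ref{Riemannian} to the $h$-projective setting and then to identify the exceptional manifolds by Calabi's rigidity. As recalled in the introduction, the class $\mathcal{P}^{Hol}(M,g)$ of Kähler metrics $h$-projectively equivalent to $g$ is an open subset of a finite dimensional linear space $\L^{Hol}(M,g)$, on which $\Pro^{Hol}(M,g)$ acts linearly. First I would dispose of the high-mobility range: $\dim\mathcal{P}^{Hol}(M,g)=1$ is trivial, and by the $h$-projective analogue of Theorem~\ref{higher.rank} (in the spirit of Matveev--Rosemann \cite{Mat3}) the case $\dim\mathcal{P}^{Hol}(M,g)\ge 3$ gives $\mathcal{P}^{Hol}=\mathcal{A}^{Hol}$, whence $\Pro^{Hol}=\Aff^{Hol}=\Iso^{Hol}$ on a compact Kähler manifold, unless $M$ is covered by $(\mathbb{P}^{d}(\C),g_{SF})$ (and since $\mathbb{P}^{d}(\C)$ admits no nontrivial free quotient, $M=\mathbb{P}^{d}(\C)$). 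So I may assume $\dim\mathcal{P}^{Hol}(M,g)=2$.

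The heart of the argument is the degree $2$ analysis, run exactly as for Theorem~\ref{Riemannian} but with $h$-planar curves in place of unparameterized geodesics. Projectivizing, $\Pro^{Hol}(M,g)$ acts on the projective line $\mathbb{P}(\L^{Hol}(M,g))$, and the dichotomy is whether this action is precompact. If it is, it permutes the finitely many degenerate, non-genuine-metric solutions, so its image in $\PGL(\L^{Hol})$ is finite — cyclic or dihedral, as in Theorem~\ref{Riemannian} — while the kernel of the action is, up to index $\le 2$, exactly $\Iso^{Hol}(M,g)$; this yields the asserted finite extension. If instead the action is essential, then, precisely as in the round-sphere situation, one shows that $g$ has constant holomorphic sectional curvature, so that $(M,g)$ is holomorphically homothetic to $(\mathbb{P}^{d}(\C),k\,g_{SF})$. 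I expect this last point to be the main obstacle: excluding essential dynamics on a compact manifold requires a delicate extremum and compactness argument, which here must be carried out while tracking the complex structure throughout — the genuinely Kähler input of the proof.

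Finally I would exploit the embedding hypothesis in the exceptional case. Since $M\hookrightarrow\mathbb{P}^{N}(\C)$ is a complex submanifold carrying the induced metric, and $(M,g)$ is holomorphically homothetic to $(\mathbb{P}^{d}(\C),k\,g_{SF})$, this inclusion is a holomorphic isometric immersion of $(\mathbb{P}^{d}(\C),k\,g_{SF})$ into $(\mathbb{P}^{N}(\C),g_{SF})$, i.e.\ one expanding the metric by the factor $k$. Both manifolds are complex space forms of constant holomorphic sectional curvature, so Calabi's theorem on the absolute rigidity of such immersions \cite{Calabi} applies: the immersion is, up to an ambient isometry in $\SU(1+N)$, the standard Veronese map $v_{k}\colon\mathbb{P}^{d}(\C)\to\mathbb{P}^{N}(\C)$ (Calabi's quantization moreover forces the homothety factor to be the positive integer $k$ for which $v_{k}$ is defined). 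This gives the ``more precisely'' part of the statement and completes the plan.
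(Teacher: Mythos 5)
Your overall architecture matches the paper's --- a K\"ahler version of Theorem \ref{Riemannian} plus Calabi rigidity to pin down the Veronese maps in the exceptional case --- but there is one genuine gap, and it sits exactly where the paper places its key lemma. In your high-mobility step you assert that $\mathcal P^{Hol}=\mathcal A^{Hol}$ yields ``$\Pro^{Hol}=\Aff^{Hol}=\Iso^{Hol}$ on a compact K\"ahler manifold''. That last equality is false for compact K\"ahler manifolds in general: a complex torus $\C^d/\Z[i]^d$ with a flat metric is compact K\"ahler, has degree of mobility $\geq 3$, and $\Aff^{Hol}/\Iso^{Hol}$ is \emph{infinite} (it contains the classes of non-unitary elements of $\GL_d(\Z[i])$) --- this is the paper's own flat-torus caveat. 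Note also that Theorem \ref{K\"ahler} has no hypothesis that an essential projective transformation exists; when $\Pro^{Hol}=\Aff^{Hol}$ (the generic situation, covering both mobility $1$, mobility $\geq 3$, and the non-essential mobility-$2$ cases), the entire content of the theorem is the finiteness of $\Aff^{Hol}/\Iso^{Hol}$, and your proposal never proves it: you invoke the embedding hypothesis only in the final Veronese identification, whereas it must already enter here.

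This is precisely the paper's Fact in \S \ref{K\"ahler.case}: for a complex submanifold $M\subset \mathbb P^N(\C)$ with the induced metric, $\Aff(M,g)/\Iso(M,g)$ is finite. Its proof is where Calabi rigidity does double duty: since $\C^k$ admits no holomorphic isometric immersion into $\mathbb P^N(\C)$, the universal cover $\tilde M$ has \emph{no flat factor} in its De Rham decomposition (so the flat torus is not a counterexample to the theorem --- its flat metric is never the induced one --- but only to your intermediate claim); then any affine lift permutes the finitely many irreducible factors, a power preserves each and acts on it as a homothety, and a homothety ratio $c\neq 1$ would produce a fixed point at which the $\tilde f$-invariant curvature tensor, being invariant under a contraction, must vanish, contradicting irreducibility. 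This gives finite index (not equality, because of factor permutations), which is all the theorem claims. With this Fact added, your outline closes; I would only remark that your mobility-$2$ ``essential'' case (constant holomorphic sectional curvature via the dynamics of a hyperbolic element) is asserted at the same level of detail as in the paper itself, which likewise only states that the Riemannian hyperbolic-case analysis of \S \ref{hyperbolic} extends to the $h$-projective setting.
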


\begin{remark} There are submanifolds $M \subset \mathbb P^N(\C)$ with a big ``projective'' group, say such    that 
 $G_M = \{g \in \GL_{N+1}(\C), g.M = M\}$ is non-compact and acts transitively on $M$. So, $G_M$
 does not  act  projectively with respect to the induced metric, unless $M$ is a Veronese submanifold. The $G_M$-action preserves another kind of geometric structures? It is however remarkable that all the automorphism group of any K\"ahler manifold preserves
 a huge class of minimal submanifolds (in the sense of Riemannian geometry), namely, complex submanifolds!

\end{remark}

\subsection{Towards the general (indefinite) pseudo-Riemannian case}

It was proved in \cite{Mat2}
that the quotient space $\Pro^0(M, g) /Aff^0(M, g)$ has always  dimension $ \leq 1$.  We have the following generalization to  full groups.

\begin{theorem}  \label{pseudo}  Let $(M, g) $  be a compact pseudo-Riemannian manifold  having an essential projective group, that is,  $\Pro(M,g)/Aff(M,g)$ is infinite. Then, up to finite index:

1)   $\Aff(M, g) = \Iso(M,g)$  and it is  a  normal subgroup of $\Pro(M, g)$. 

2) $\Pro(M, g)/\Iso(M, g)$  is  isomorphic to  a subgroup of  $\R$. More precisely,  
 there is a representation $ \Pro(M, g) \to 
\SL_2(\R)$ whose kernel is $\Aff(M, g)$ and range   contained in a non-elliptic  1-parameter  group.

\end{theorem}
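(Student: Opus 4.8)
The plan is to transport the entire problem to the linear action of $\Pro(M,g)$ on the plane $\L(M,g)$ of \S\ref{linear}, and to read off the conclusion from the dynamics of this action on $\mathbb{P}(\L(M,g)) = \R P^1$. First I would use the essential hypothesis together with Theorem \ref{higher.rank} to pin the degree of mobility to exactly $2$: if $\dim \mathcal{P}(M,g) \geq 3$ then (away from the spherical case) $\Pro = \Aff$, while if $\dim \mathcal{P}(M,g) = 1$ then $\mathcal{P}(M,g) = \R^+.g$ and every projective map is a homothety of $g$, hence an isometry by compactness; either way $\Pro/\Aff$ would be finite. Thus $\dim \L(M,g) = 2$, and since the group preserves projective equivalence it acts linearly, giving a representation $\rho : \Pro(M,g) \to \GL(\L(M,g)) = \GL_2(\R)$ under which $g$ corresponds to $\mathrm{Id} \in \L(M,g)$.

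Second, I would identify the kernel. An element of $\ker \rho$ fixes $\mathrm{Id}$, i.e. satisfies $f^*g = g$, so it is an isometry; conversely $\rho(\Iso(M,g))$ is a compact subgroup of $\GL_2(\R)$ fixing the nonzero vector $\mathrm{Id}$, and any such subgroup is finite, so $\Iso(M,g) \subseteq \ker \rho$ up to finite index. The equality $\Aff(M,g) = \Iso(M,g)$ up to finite index is where flatness must be excluded — a compact manifold carrying an affine, non-isometric transformation carries parallel structure of large projective mobility, incompatible with $\dim \mathcal{P}(M,g) = 2$ (this is exactly what rules out the flat torus and its $\SL_n(\Z)$) — after which the standard finite-invariant-volume argument applies. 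Hence $\ker \rho = \Aff(M,g) = \Iso(M,g)$ up to finite index, and its normality in $\Pro(M,g)$ is automatic, which is part 1). Finally, the character $\det \circ \rho : \Pro(M,g) \to \R^*$ measures the scaling of the pseudo-Riemannian volume, and is therefore trivial up to finite index by compactness; this places $\rho(\Pro(M,g))$ inside $\SL_2(\R)$ up to finite index.

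Third — the heart of the matter, where the discrete part of $\Pro$ is genuinely at stake — I would confine the image to a single one-parameter subgroup. The decisive invariant is the degeneracy locus $\{[L] : \det L = 0 \text{ somewhere on } M\} \subset \mathbb{P}(\L(M,g))$, the directions of the pencil $a\,\mathrm{Id} + b L_0$ along which the metric degenerates; it is governed by the spectrum of $L_0$ and is preserved by $\rho(\Pro(M,g))$. An infinite subgroup of $\SL_2(\R)$ preserving this proper invariant configuration must be elementary, hence up to finite index it fixes one or two points of $\R P^1$. The two-point (hyperbolic) case is then \emph{excluded} by compactness: a torus element scales the two fixed metric directions by $s$ and $s^{-1}$, realizing a genuine homothety unless $s = \pm 1$, so the image would be finite, contrary to essentiality. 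Thus the image fixes a single direction $\ell_0$ and lies in the corresponding Borel subgroup, and the same homothety obstruction kills the torus part of that Borel, leaving a unipotent one-parameter subgroup. Composing $\Pro(M,g) \to \rho(\Pro(M,g)) \hookrightarrow \SL_2(\R)$ then yields the representation of part 2), with kernel $\Aff(M,g)$ and range in a one-parameter group, so that $\Pro/\Iso$ is a torsion-free subgroup of $\R$.

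I expect the genuinely hard work to be this third step in the authentically pseudo-Riemannian regime. Two features make it delicate: a $g$-self-adjoint $L_0$ for indefinite $g$ may have complex spectrum, so the degeneracy locus can be small or empty and the invariant structure must instead be extracted from a $\Pro$-invariant (possibly real-point-free) conic on $\L(M,g)$; and one must exclude non-elementary and rotational (elliptic) images, i.e. rule out free or equidistributing projective dynamics, which is precisely where compactness and recurrence have to be leveraged to forbid behaviour that the infinitesimal statement of \cite{Mat2} does not see. Verifying that homotheties of compact pseudo-Riemannian manifolds are isometric up to finite index, invoked twice above, is the remaining technical point to nail down carefully.
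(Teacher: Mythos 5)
Your overall skeleton --- mobility pinned to $2$, the linear representation $\rho$ on $\L(M,g)$, kernel $=\Aff=\Iso$, an invariant degeneracy locus forcing an elementary image --- is indeed the paper's route, and you correctly anticipate that in indefinite signature the spectrum may be complex, so the invariant object must be complexified (the paper works with $\mathcal D^\C\subset \mathbb P(\L(M,g)\otimes\C)$ and plays it against the hyperbolic geometry of $\mathbb P^1(\C)\setminus\mathbb P^1(\R)$: compact invariant sets, horoballs, pairs of conjugate discs). But your third step contains a genuine error: you \emph{exclude} the hyperbolic case, whereas the theorem deliberately does not --- its conclusion ``range contained in a one-parameter group'' is phrased precisely so as to allow elliptic, parabolic \emph{and} hyperbolic one-parameter ranges, and the paper's case analysis ends by placing $G$ in one of these three, eliminating none. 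Your exclusion argument conflates fixed points of $\rho(f)$ on $\mathbb P(\L(M,g))$ with metrics rescaled by $f$: a hyperbolic $\rho(f)$ fixes two directions $[L_\pm]\neq[I]$, and multiplying $L_\pm$ by $s^{\pm1}$ only rescales two \emph{other} metrics of the projective class; it does not make $f$ ``a genuine homothety'' of $g$. A homothety means $K_f\in\R I$, i.e.\ $\rho(f)$ fixes the direction $[I]$ itself, and the finite-volume argument kills only those. Indeed, in the Riemannian part of the paper the hyperbolic case is exactly the hard one (\S 6): it is eliminated there not by a two-line compactness trick but by proving the projective Weyl tensor vanishes, via real ordered eigenfunctions, the Matveev--Topalov separation theorem, and Jacobian growth estimates --- all resting on Riemannian positivity that is unavailable in indefinite signature. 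If your torus-element obstruction were valid, it would in effect settle the compact pseudo-Riemannian projective Lichnerowicz conjecture, which the paper states as open; so this step, the declared ``heart of the matter,'' fails.

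A second, smaller gap is the kernel computation. The claim that $\rho(\Iso(M,g))$ is a \emph{compact} subgroup of $\GL_2(\R)$ is unjustified here: isometry groups of compact pseudo-Riemannian manifolds can be noncompact (flat Lorentz tori already carry $\SL_2(\Z)$ as isometries), and the a priori stabilizer of the vector $I$ in $\GL_2(\R)$ is a noncompact group isomorphic to $\Aff(\R)$, containing parabolic and hyperbolic elements, so ``fixes $I$'' alone yields nothing finite. The paper's mechanism is different and genuinely uses the invariant set: for $h\in\Aff(M,g)$, constancy of the eigenvalues forces $K_h=aI$ (otherwise $I$ and $K_h$ span $\L(M,g)$ by parallel tensors and $\Pro=\Aff$, contradicting essentiality), so $\rho(h)$ fixes $[I]$; preservation of the functional $N$ pins the eigenvalue there, so a nontrivial $\rho(h)$ would act as a parabolic homography on $\mathbb P^1(\C)$, yet it preserves the closed set $\mathcal D^\C$, which avoids its unique fixed point $[I]$ --- impossible. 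This, not compactness, is what gives $\ker\rho=\Aff(M,g)=\Iso(M,g)$ and the normality in part 1). (Your mobility-one reduction and the determinant/volume normalization are fine modulo the same finite-volume bookkeeping the paper also performs.)
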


\subsubsection{Organization} We restrict ourselves here to compact manifolds, and from \S \ref{homography} to
the case of metrics of projective mobility $\dim \mathcal P(M, g) = 2$.

Our proofs are  mostly algebraic, somewhere dynamical but rarely geometrical!

 \subsubsection*{Acknowledgements}
 I would like to thank L. Florit,  	 A. J.  Di Scala, V. Matveev and P.  Mounoud for their help, and especially the referee for many interesting 
 remarks and suggestions.

 \subsubsection{Added to last version} Around one year and half  after the  publication of
 our present work (in ArXiv), V. Matveev   \cite{Mat6} improved  estimate in Theorem 
 \ref{Riemannian}
 of  the (finite) index of 
 $\Aff(M, g)$ in $\Pro(M, g)$.  He shows that this index is $\leq 2$ (of course when
 $(M, g)$ is not homothetic to a quotient  of the standard sphere). Matveev's proof consists 
 in pursuing analysis of  our elliptic case \S \ref{all.elliptic}  (otherwise, he bases on our results here, especially 
 in the hyperbolic case). 

\section{Actions, General considerations}

\label{general}

$M$ is here a compact smooth manifold.

\subsubsection{}  Let $\mathcal E$ be the space  of $(1,1)$-tensors $T$, i.e. sections of the linear bundle $End(TM) \to M$: for any $x$, $T_x$ is linear map $T_xM \to T_xM$.  The space 
 $\mathcal E$    has a natural structure of algebra  with unit element $I$ the identity of $TM$ (over $\R$ as well as over $C^\infty(M)$ or $C^k(M)$).

$\Diff(M)$ acts  naturally on $\mathcal E$ by $(f, T) \to \rho^E(f)T= f_*T$ defined naturally 
 by $(f_*P)_x= D_{f^{-1}x}fT_{f^{-1}x}D_xf^{-1}$.

\subsubsection{}  Let $\mathcal G$ be the space of pseudo-Riemannian metrics on $M$.  Then,  $\Diff(M) $ acts on $\mathcal G$ by   taking direct image, $(f, g) \to  \rho^G(f)g=  f_*g$  defined by
$(f_*g)_x(u, v) = g_{f^{-1}(x)}((D_xf)^{-1} u, (D_xf)^{-1} v)$.

\subsubsection{Notation} We will sometimes use the usual notations $f_*T$ and $f_*g$ for $\rho^E(f)T$ and $\rho^G(f)g$, respectively. 

\subsubsection{Transfer}
Given a metric  $g_0$ on $M$, any other metric ${g}$ 
can by written ${g}(., .) = g_0(T., .)$, where  the {\ transfer} tensor $T = T({g}, g_0)$
is a  $g_0$-symmetric$ (1, 1)$-tensor  
(i.e. $T_x$ is   a symmetric endomorphism of $(T_xM, {{g}_0}_x)$).  

In fact, a metric $g$ defines a bundle isomorphism $TM \to T^*M$, and thus  $T(g,g_0) = g_0^{-1}g$.

In other words, we have a map $C_{g_0}:  g \in \mathcal G \to T(g,g_0) = g_0^{-1}g  \in \mathcal E$. In particular $C_{g_0} (g_0) = I$ (the identity of $TM$).

\subsubsection{Transfer action} The transfer of the natural $\Diff$-action $\rho^E$  on $\mathcal G$ to $\mathcal E$  by means of $C_{g_0}$, is by definition 
$$(f, T) \to 
\rho^{GE}(f)(T)= C_{g_0} (\rho^G(f) (C_{g_0}^{-1}T))$$
 It equals:
$${g_0}^{-1}  \rho^G(f) (g_0 T) = 
g_0^{-1}( \rho^G(f) g_0) (\rho^E(f)T)  =  
  S_f  \rho^E(f)T$$ 
  where the $g_0$-strength of $f$  is $S_f = 
  g_0^{-1} ( \rho^G(f) g_0) $.

  \subsubsection{A preserved functional} 
  The following  ``norm-like'' functional $Q(T)= \int_M \sqrt{ \vert \det T \vert} dv_{g_0}$  is preserved by $\rho^{GE}$.
   Indeed,  $$Q(\rho^{GE}(f)T)= \int_M \sqrt{ \vert (\det  S_f) \det (f_* T)  \vert} dv_{g_0}  =   \int_M \sqrt{  \vert \det T  \vert}  (f^{-1}(x)) \Jac_x f^{-1} d v_{g_0}, $$
   and this equals $Q(T)$.

\subsubsection{} Consider now the  partially defined transform  $\mathcal F: L \in \mathcal E \to 
  T = \frac{L^{-1}}{\det L}  \in \mathcal E$.    Its inverse map is given by ${\mathcal F}^{-1}(T) = (\det T)^{\frac{1}{1+d}} T^{-1}$ ($d = \dim M$).  
  
  It is remarkable that $\mathcal F$ commutes with the $\Diff$-action $\rho^E$ on  $\mathcal E$.  The finite dimensional version of this for a linear space $E$   is that $u  \to \End(E) \to \frac{u^{-1}}{\det u}  \in \End(E)$ commutes with the $\GL(E)$ action given by  $(A, u) \in \GL(E) \times \End(E) \to   AuA^{-1} \in \End(E)$

  \subsubsection{Action in the $L$-representation} Consider now the map $$g \in \mathcal G \to  L = \mathcal  F^{-1} \mathcal (C_{g_0} (g)) \in \mathcal E$$ In other words, to a metric $g$, we associate the $(1,1)$-tensor 
  $L$ such that   $g(., .) = \frac{1}{\det L}g_0(L^{-1}., .)$. 
  
  The corresponding action $\rho$ on $\mathcal E$ is given by: $$ \rho(f) L = (\rho^E(f)L) K_f $$ where $K_f$, the $g_0$-strength of $f$ in the $L$-representation, is  the $\mathcal F^{-1}$-transform of $S_f$, that is $K_f$ is defined by 
  $\rho^G(f)g_0(., .) = \frac{1}{\det K_f} g_0(K_f^{-1}., .)$.
  
Corresponding to $Q$,   $\rho$ preserves the partially defined  functional: $L \to  N(L) = \int_M \frac{1}{ \det L^{(1+d)/2}} dv_{g_0}$

\subsubsection{The chain rule for strength}

   $$K_{f^n}=  ({f_*^{n-1}}K_f) ({f_*^{n-2}}K_f) \ldots (f_*K_f) K_f$$ (of course $(f^k)_* = (f_*)^k$).

 \subsubsection{} Summarizing:
 \begin{fact} \label{summer}  Let $g_0$ be a fixed metric on $M$.  To any metric $g$, let $L$ be the $(1,1)$ tensor defined by $g(., .) = \frac{1}{\det L}g_0(L^{-1}., .)$.  The $\Diff$-action on $(1,1)$tensors deduced from the usual action on metrics by means of this map $g \to L$ is given by 
 $$(f, L) \in \Diff(M) \times \mathcal E \to \rho(f)L= (f_*L )K_f$$
  Here $K_f$ is the $L$-tensor associated to $f_*g_0$, i.e. $f_*g_0 = \frac{1}{\det K_f} g_0(K_f^{-1}., .)$, and 
 $f_*L$ denotes the usual action on $\mathcal E$.

-  $f$ is an isometry of $g_0$ $\iff $  $K_f = I$

 -  $f$ is a $g_0$-similarity (that is $f_*g_0= b g_0$ for some constant $b$)  $\iff$ $K_f = bI$ for some $b$.
 
 -   $\rho$  preserves the function $L \to N(L) = \int_M \frac{1}{ \det L^{(1+d)/2}} dv_{g_0}$
 
 \end{fact}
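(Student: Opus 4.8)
The plan is to treat the statement as a repackaging of the computations of the preceding subsections, so that the task reduces to three verifications: the action formula $\rho(f)L=(f_*L)K_f$, the two characterizations of $K_f$, and the invariance of $N$. For the action formula I would transport the already established formula in the $T$-representation, $\rho^{GE}(f)T=S_f\,(f_*T)$ with $S_f=g_0^{-1}(f_*g_0)$, through the transform $\mathcal F$. Since $\rho$ is by construction the conjugate $\mathcal F^{-1}\circ\rho^{GE}\circ\mathcal F$ and $\mathcal F$ commutes with $f_*=\rho^E(f)$, writing $T=\mathcal F(L)$ gives $\rho^{GE}(f)T=S_f\,\mathcal F(f_*L)$, and it remains to apply $\mathcal F^{-1}(U)=(\det U)^{1/(1+d)}U^{-1}$ to $U=S_f\,(f_*L)^{-1}/\det(f_*L)$.

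The determinant bookkeeping is then routine: $\det U=\det S_f/(\det(f_*L))^{1+d}$, so $(\det U)^{1/(1+d)}=(\det S_f)^{1/(1+d)}/\det(f_*L)$, while $U^{-1}=\det(f_*L)\,(f_*L)\,S_f^{-1}$. The two determinant factors cancel, and moving the remaining (scalar) factor to the right yields $\rho(f)L=(f_*L)\bigl[(\det S_f)^{1/(1+d)}S_f^{-1}\bigr]=(f_*L)\,\mathcal F^{-1}(S_f)=(f_*L)K_f$, since $K_f$ is by definition $\mathcal F^{-1}(S_f)$. A more conceptual alternative would be to show directly that $g\mapsto L$ is equivariant, $L_{f_*g_0}(f_*g)=f_*L_{g_0}(g)$, and then absorb the change of reference metric $g_0\rightsquigarrow f_*g_0$ into the factor $K_f=L_{g_0}(f_*g_0)$; both routes give the same formula.

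For the two characterizations I would substitute into the defining relation $f_*g_0(.,.)=\frac{1}{\det K_f}g_0(K_f^{-1}.,.)$. If $f$ is a $g_0$-isometry then $f_*g_0=g_0$, forcing $K_f^{-1}=(\det K_f)I$; this makes $K_f$ a scalar $\mu^{-1}I$ with $\mu^{1+d}=1$, hence $K_f=I$, and the converse is immediate since $K_f=I$ gives $\det K_f=1$. The similarity case is identical with $g_0$ replaced by $bg_0$, producing $K_f=cI$ with $b=c^{-(1+d)}$. Finally, invariance of $N$ follows at once from $N=Q\circ\mathcal F$: since $\det\mathcal F(L)=(\det L)^{-(1+d)}$ one has $\sqrt{|\det\mathcal F(L)|}=|\det L|^{-(1+d)/2}$, so $Q(\mathcal F(L))=N(L)$, and invariance of $N$ under $\rho$ is then inherited from the invariance of $Q$ under $\rho^{GE}$ already checked by change of variables.

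The one genuinely delicate point, which I expect to be the main obstacle, is the determinant algebra in the pseudo-Riemannian setting: $\det S_f$ and $\det L$ may be negative, so the $(1+d)$-th root implicit in $\mathcal F^{-1}$ requires a consistent choice of sign/branch. One must check that this choice is compatible with the sign dictated by $f_*g_0$, so that $K_f$ is a genuine transfer tensor and the scalar equation $\mu^{1+d}=1$ still selects $K_f=I$; and that $Q$, hence $N$, is defined with absolute values precisely so as to neutralize these signs.
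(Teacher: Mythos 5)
Your proposal is correct and takes essentially the same route as the paper: there the Fact is stated as a summary of the computations already carried out in \S 2 (the transfer action $\rho^{GE}(f)T = S_f\, f_*T$, the commutation of $\mathcal F$ with $f_*$, and the definitions $K_f = \mathcal F^{-1}(S_f)$ and $N = Q \circ \mathcal F$), and you simply redo that conjugation with the determinant bookkeeping written out, plus the immediate substitutions for the isometry and similarity characterizations. Your closing caveat about the branch of the $(1+d)$-th root in indefinite signature (e.g.\ $\mu^{1+d}=1$ allowing $\mu=-1$ for $d$ odd, resolved by the standard convention with absolute values, which is also why $Q$ and $N$ carry them) is a genuine subtlety that the paper glosses over, and flagging it strengthens rather than changes the argument.
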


\section{Linearization, Representation of $\Pro(M, g)$ in $\L(M, g)$}

\label{linear}
(We will henceforth mostly  deal with only one metric and so we will denote it $g$ instead of $g_0$).

\subsubsection{The space $\L(M,  g)$} 
Recall that $\mathcal P(M, g)$
denotes the 
class  of metrics projectively equivalent 
to $g$. 
 
 Let $\L^*(M, g)$  
 be  the image of $\mathcal P(M, g)$ under the correspondence of Fact \ref{summer},  and $\L(M, g) $
 its linear hull:
   $$\L(M, g) =     \{ L = \Sigma_i a_i L_i,  a_i \in \R,  \;  \hbox{ such that}  \; \frac{1}{\det L_i}g(L_i^{-1}., .)
\; \hbox{is projectively equivalent to}  \; g \}$$

Let us call $\L$-tensors the elements of this space.

\subsubsection{Linearization}

\begin{theorem}  \cite{Bol-Mat, Sin} \label{Bol-Mat, Sin} 
$L \in \L(M, g)$ iff $L$ satisfies the linear equation:
$$g((\nabla_u L)v, w) = \frac{1}{2}g(v, u) d \sf{trace}(L)(w) + \frac{1}{2}g(w, u) d \sf{trace}(L)(v) $$
where $\nabla$ is the Levi-Civita connection of $g$.

Furthermore:

 - $\L^*(M,  g) $ is an open subset of $\L(M, g)$: an element $L \in \L(M,  g)$ belongs to $\L^*(M, g)$ iff it is an isomorphism of 
 $TM$.
 
- $\L(M, g)$ has finite dimension (bounded by that corresponding to the projective space of same dimension).

- $L \in \L^*(M, g)$ is parallel iff the corresponding metric $ \frac{1}{\det L}g(L^{-1}., .)$ is affinely equivalent to $g$, iff
$L$ has constant eigenvalues. 
\end{theorem}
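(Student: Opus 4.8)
The plan is to reduce this nonlinear statement about metrics to a linear statement about connections, and then to restore linearity in the right variable by a determinant-weighted change of unknown (Sinjukov's substitution). The starting point is the classical theorem of Levi-Civita: two torsion-free connections $\nabla$ and $\bar\nabla$ have the same unparameterized geodesics if and only if their difference is pure trace, i.e. there is a $1$-form $\phi$ with $\bar\nabla_X Y = \nabla_X Y + \phi(X) Y + \phi(Y) X$. Since the desired identity is tensorial, I would compute in local coordinates, taking $\nabla$ to be the Levi-Civita connection of the fixed metric $g$ and $\bar\nabla$ that of a projectively equivalent $\bar g$. Imposing the metricity condition $\bar\nabla \bar g = 0$ and expanding it through the difference formula yields
$$\nabla_u \bar g (v, w) = 2\, \phi(u)\, \bar g(v,w) + \phi(v)\, \bar g(u,w) + \phi(w)\, \bar g(u,v).$$
Contracting this with $\bar g^{-1}$ shows that $\phi$ is exact: $\phi = \frac{1}{2(1+d)}\, d\log\bigl|\det(g^{-1}\bar g)\bigr|$. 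This is precisely where the weight $\tfrac{1}{1+d}$ enters.

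The real work, and the step I expect to be the main obstacle, is the substitution itself. Passing from $\bar g$ to the tensor $L$ of Fact~\ref{summer}, i.e. the $g$-selfadjoint $(1,1)$-tensor with $\bar g(\cdot,\cdot) = \frac{1}{\det L}\, g(L^{-1}\cdot, \cdot)$, is exactly the change of variable designed so that the inhomogeneous terms $2\phi(u)\bar g$ collapse. I would substitute this expression for $\bar g$ into the display above, differentiate the determinant through $\nabla_u \log|\det L| = \sf{trace}(L^{-1}\nabla_u L)$, and use the formula for $\phi$; after the bookkeeping the $\phi$-terms recombine to leave exactly
$$g((\nabla_u L) v, w) = \frac{1}{2} g(v,u)\, d\,\sf{trace}(L)(w) + \frac{1}{2} g(w,u)\, d\,\sf{trace}(L)(v).$$
That $\lambda := \frac12 d\,\sf{trace}(L)$ is forced is then confirmed a posteriori by contracting this equation over the pair $(v,w)$, which returns $\nabla_u \sf{trace}(L) = 2\lambda(u)$. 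The converse is the same computation read backwards: given an invertible $g$-selfadjoint $L$ solving the linear equation, the metric $\frac{1}{\det L} g(L^{-1}\cdot,\cdot)$ is well defined and its Levi-Civita connection differs from $\nabla$ by a pure-trace tensor, hence is projectively equivalent to $g$.

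This simultaneously identifies $\L^*(M,g)$ with the invertible solutions: nondegeneracy of $L$ is an open condition, every genuine metric yields a nondegenerate $L$, and $\L(M,g)$ is by definition the linear hull, so $\L^*$ is open in $\L$ and coincides with $\{L : L \text{ is an isomorphism of } TM\}$. For finite dimensionality I would prolong the system. Treating $\lambda$ as an independent unknown and differentiating the equation a second time, the commutator of covariant derivatives brings in the curvature of $g$ acting linearly on $L$ and forces a relation $\nabla_u\lambda = \mu\, g(u,\cdot) + (\text{curvature terms linear in } L)$ involving a single new scalar $\mu$; a third differentiation expresses $\nabla\mu$ linearly through $(L,\lambda)$ and closes the system. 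The outcome is a linear connection on the bundle with fibre $\mathrm{Sym}_g(TM) \oplus TM \oplus \R$ carrying $(L,\lambda,\mu)$, whose parallel sections correspond bijectively to solutions. As a parallel section is determined by its value at one point, $\dim \L(M,g)$ is at most the rank $\frac{(d+1)(d+2)}{2}$ of this bundle, the value attained on flat projective space.

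Finally, for the parallel criterion, the substitution computation also gives $\phi = -\frac12\, d\log|\det L|$, so for $L \in \L^*(M,g)$ one has $\phi = 0 \iff \det L$ is constant, and, combined with the metricity display, this is equivalent to $\nabla L = 0$; alternatively the linear equation shows directly that $\nabla L = 0 \iff d\,\sf{trace}(L) = 0$. Since $\phi = 0 \iff \bar\nabla = \nabla$, the condition $\nabla L = 0$ is precisely affine equivalence of $\frac{1}{\det L} g(L^{-1}\cdot,\cdot)$ with $g$. The equivalence with constant eigenvalues is then immediate: a parallel endomorphism has spectrum invariant under parallel transport, hence constant eigenvalues, while conversely constant eigenvalues make $\sf{trace}(L)$ constant and thus force $\nabla L = 0$ through the linear equation.
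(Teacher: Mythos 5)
This theorem is stated in the paper with citations to \cite{Bol-Mat, Sin} and is not proved there, so the relevant comparison is with those references: your argument --- Weyl's pure-trace characterization of projectively equivalent torsion-free connections, exactness of $\phi$ via contraction with $\bar g^{-1}$, Sinjukov's determinant-weighted substitution $\bar g = \frac{1}{\det L}g(L^{-1}\cdot,\cdot)$ collapsing the $\phi$-terms into the stated linear equation, and the prolongation to a linear connection on a bundle of rank $\frac{(d+1)(d+2)}{2}$ for finite dimensionality --- is precisely the standard proof from that literature, and it is sound, including the parallel/affine/constant-eigenvalue equivalences. One small omission worth fixing: your converse only places \emph{invertible} solutions of the linear equation in $\L(M,g)$, whereas the stated equivalence requires every solution to lie in the linear hull of $\L^*(M,g)$; this follows by noting that $I$ is itself a solution and that, $M$ being compact so the eigenvalues of $L$ are bounded, $L+tI$ is invertible for $t$ large, whence $L=(L+tI)-tI$ is a combination of invertible solutions.
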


\subsubsection{Linear representation of $\Pro(M, g)$}

\begin{fact} \label{injective}

We have a finite dimensional representation,
$$f \in \Pro(M, g) \to  \rho(f)  \in \GL( \L(M, g))$$
where $\rho(f)(L) = f_*(L).K_f$.

 $\bullet$ $\rho$  preserves the norm-like 
function $N(L)  = \int_M \frac{1}{ \det L^{(1+d)/2}} dv_{g}$.

$\bullet$ Let 
$p: \GL (\L(M, g)) \to \PGL(\L(M, g))$ be the canonical projection , then $p$ is injective on $\rho (\Pro(M, g))$, 
or has at most a  kernel $ \cong \Z/2 \Z$.

$\bullet$ Let $\mathcal D$ be the subset of degenerate  tensors in $\L(M, g)$: $$\mathcal D = \{ L \in \L(M, g), \; L  \hbox{ not an isomorphism of } \; TM \}$$
Then $\mathcal D$ is a closed cone invariant under $\rho$.

\end{fact}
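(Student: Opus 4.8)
The plan is to first confirm that $\rho$ is a bona fide finite-dimensional linear representation, and then to verify the three bullets in turn. I expect no serious obstacle here: the \textsf{Fact} is essentially a repackaging of Fact \ref{summer} and the Linearization theorem, and the only step that requires an argument beyond bookkeeping is the second bullet.

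\emph{Well-definedness of $\rho$.} For a fixed $f$ the map $L \mapsto \rho(f)L = (f_*L)K_f$ is linear in $L$, since $f_*$ is linear on $\mathcal E$ and $K_f$ depends only on $f$ and $g$, not on $L$. To see that it preserves $\L(M,g)$, I use that projective equivalence is an equivalence relation (as noted in the introduction) and that a diffeomorphism carries unparameterized geodesics to unparameterized geodesics: if $g' \in \mathcal P(M,g)$ then $f_* g'$ is projectively equivalent to $f_* g$, which for $f \in \Pro(M,g)$ is projectively equivalent to $g$, so by transitivity $f_* g' \in \mathcal P(M,g)$. Under the correspondence $g' \mapsto L'$ of Fact \ref{summer} this says precisely that $\rho(f)$ maps $\L^*(M,g)$ into itself; since $\L(M,g)$ is its linear hull and $\rho(f)$ is linear, $\rho(f)$ preserves $\L(M,g)$. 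The homomorphism identity $\rho(fh)=\rho(f)\rho(h)$ reduces, after expanding both sides, to the strength cocycle $K_{fh} = (f_*K_h)K_f$, which is the two-term instance of the chain rule for strength recorded above. Finiteness of $\dim \L(M,g)$ is part of the Linearization theorem.

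\emph{Bullets 1 and 3.} The first bullet is immediate, since $\Pro(M,g)\subset \Diff(M)$ and the full $\Diff$-action was already shown in Fact \ref{summer} to preserve $N$. For the third bullet, $\L^*(M,g)$ is open in $\L(M,g)$ by the Linearization theorem, so its complement $\mathcal D$ is closed; $\mathcal D$ is a cone because for $t\neq 0$ the tensor $tL$ is an isomorphism iff $L$ is, while the zero tensor lies in $\mathcal D$; and $\mathcal D$ is $\rho$-invariant because $K_f$ is an isomorphism of $TM$ (it is the $L$-tensor of the nondegenerate metric $f_*g$) and $f_*$ preserves invertibility, so $\rho(f)L=(f_*L)K_f$ is an isomorphism exactly when $L$ is.

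\emph{Bullet 2 (the crux).} Here I control the scalars in the image. Suppose $\rho(f)=\lambda I$ for some $\lambda\in\R^*$; passing to $f^2$ I may assume $\mu:=\lambda^2>0$ and $\rho(f^2)=\mu I$. Apply the $N$-invariance to $L=I\in\L^*(M,g)$, for which $N(I)=\mathrm{vol}(M,g)$ is finite and positive because $M$ is compact. Since $\det(\mu I)=\mu^d$, one gets $N(\mu I)=\mu^{-d(1+d)/2}N(I)$, and invariance forces $\mu^{d(1+d)/2}=1$ with $\mu>0$, hence $\mu=1$ and $\lambda=\pm 1$. Therefore $\ker p \cap \rho(\Pro(M,g))\subseteq\{\pm I\}$, so $p$ is injective on $\rho(\Pro(M,g))$ or has kernel $\cong \Z/2\Z$. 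The only delicate point is the passage to $f^2$, which lets me avoid negative determinants and reduces the statement to the elementary fact that a positive real root of unity equals $1$.
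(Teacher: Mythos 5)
Your proposal is correct and takes essentially the same route as the paper: the paper's own proof derives the second bullet from the scalar homogeneity $N(aL)=\vert a\vert^{-d(d+1)/2}N(L)$ together with $N$-invariance (your evaluation at $L=I$ and the squaring trick are cosmetic variants of this), and it proves the $\rho$-invariance of $\mathcal D$ pointwise via $\det(\rho(f)L)(f(x))=\det L(x)\,\det K_f(x)$, which is exactly your invertibility argument. The extra verifications you supply (linearity, preservation of $\L(M,g)$ via transitivity of projective equivalence, and the cocycle $K_{fh}=(f_*K_h)K_f$ from the chain rule for strength) are correct and are simply taken for granted in the paper.
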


\begin{proof}  

${}$

- The first point is imported from Fact \ref{summer}

-   For the second one,  let  $a A$ and $A$ in $\GL( \L(M, g))$ such that  both preserve $N$, then $N(a A(L)) = N(A(L)) = N(L)$, for any $L$.  But $N(a L) = \vert a \vert ^s N(L)$ with $s = - d(d+1)/2$, and hence $a = \pm 1$. 

- To prove $\rho$-invariance of $\mathcal D$, observe that $L \in \mathcal D$   iff  for some $x \in M$, $\det L (x) = 0$. But $\rho(f) L = f_* L  K_f$, and hence $\det (\rho(f) L) (f(x))  = \det L (x) \det K_f(x) = 0$

\end{proof}

\begin{remark} Actually  $\mathcal D$  coincides essentially with the $\infty$-level of $N$.

\end{remark}

\section{The Case $\dim \L(M, g) = 2$,  a Homography}

\label{homography}

\subsubsection{Hypothesis} Henceforth, we will assume 
  that   $\dim \L(M, g)= 2$.

 Fix    $f$ that is   not homothetic, i.e.  $K= K_f$ is
not a multiple of $I$. Hence $\L(M, g)$ is 
  spanned by $K$ and $I$.

\subsection{The degenerate set $\mathcal D$}

 \begin{fact} 
 \label{sectors}
The subset of degenerate tensors (defined above) satisfies:  $$ \mathcal D = \{ a(K -t I),  a \in \R, \: \hbox{and} \;   t  \;  \hbox{a real spectral value of } K : \det(K(x)-tI) = 0\;\;  \hbox{for some }\; x\}$$ 
 
 In particular $I$ and $K_f$ $\notin \mathcal D$. 
 
If the spectrum is real and described by $d$ continuous eigenfunctions 
   $x \to \la_1(x) \leq   \ldots \leq \la_d(x)$ ($d = \dim M$), then  $ \mathcal D = \cup_{i= 1}^{i= d} (C_i \cup -C_i)$, where 
   $$C_i = \{ a(K -t I),  a \in \R^+, \: \hbox{and} \;   \inf \la_i \leq t  \leq \sup \la_i \}$$ 
  
  Each $C_i$ is a proper convex  cone (sector). 
  
  Finally, unifying intersecting sectors, we get a minimal  union:  $ \mathcal D = \cup_{i= 1}^{i= k} (D_i \cup -D_i)$, where the $D_i$
  are disjoint sectors.

 \end{fact}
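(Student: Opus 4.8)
The plan is to reduce everything to a fibrewise determinant computation, since membership in $\mathcal D$ is a pointwise condition: $L \in \mathcal D$ iff $\det L(x) = 0$ for some $x \in M$. Because $\dim \L(M,g) = 2$ and $\L(M,g) = \Spa\{K, I\}$, every $\L$-tensor is $L = aK + bI$ with $a, b \in \R$. First I would treat the case $a \neq 0$: setting $t = -b/a$ gives $L = a(K - tI)$, and fibrewise $\det L(x) = a^d \det(K(x) - tI)$ with $d = \dim M$, which vanishes for some $x$ iff $t$ is a real eigenvalue of $K(x)$ for some $x$, i.e. iff $t$ is a real spectral value of $K$ somewhere on $M$. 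The remaining case $a = 0$ gives $L = bI$ with $\det L = b^d$, degenerate only when $b = 0$. Together these yield the first displayed description of $\mathcal D$, and in particular show $I \notin \mathcal D$ and $K = K_f \notin \mathcal D$ — the latter because, $f$ being projective, $f_*g \in \mathcal P(M,g)$ has $L$-tensor $K_f$, so $K_f \in \L^*(M,g)$, which by the linearization theorem (\S\ref{linear}) consists exactly of the isomorphisms of $TM$.

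Next, under the hypothesis that the spectrum is real with continuous ordered eigenfunctions $\la_1(x) \leq \cdots \leq \la_d(x)$, the set of $t$ arising as a spectral value somewhere is $\bigcup_{i} \la_i(M)$. Since $M$ is compact and connected and each $\la_i$ is continuous, $\la_i(M) = [\inf \la_i, \sup \la_i]$ is a closed interval. Splitting the scalar $a$ according to its sign then produces exactly the families $C_i$ (from $a > 0$) and $-C_i$ (from $a < 0$), glued along the common vertex $0$ (the case $a = 0$), so that $\mathcal D = \bigcup_{i=1}^{d}(C_i \cup -C_i)$. To see that each $C_i$ is a proper convex cone I would pass to the linear coordinates $aK + bI \mapsto (a,b)$ on $\L(M,g) \cong \R^2$: the condition $a \geq 0$, $\inf\la_i \leq t \leq \sup\la_i$ becomes $a \geq 0$ and $-a\sup\la_i \leq b \leq -a\inf\la_i$, i.e. the intersection of two half-planes through the origin, hence a convex sector; it lies in the closed half-plane $\{a \geq 0\}$ and subtends an angle $< \pi$ because $[\inf\la_i, \sup\la_i]$ is bounded, so the sector is pointed.

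Finally, for the minimal decomposition I would note that $C_i$ and $C_j$ meet away from $0$ precisely when the intervals $[\inf\la_i, \sup\la_i]$ and $[\inf\la_j, \sup\la_j]$ intersect. Merging the finitely many such intervals into maximal pairwise-disjoint closed intervals and taking the corresponding sectors $D_1, \dots, D_k$ gives cones that are pairwise disjoint off the vertex with $\bigcup_i C_i = \bigcup_i D_i$; since every $D_i$ sits in $\{a \geq 0\}$ and every $-D_i$ in $\{a \leq 0\}$, the two families meet only at $0$, yielding $\mathcal D = \bigcup_{i=1}^{k}(D_i \cup -D_i)$ as claimed. The only genuinely non-formal points are the use of compactness and connectedness of $M$ (together with continuity of the ordered eigenvalues, which persists through eigenvalue collisions) to identify each $\la_i(M)$ with a closed interval, and the bookkeeping ensuring that the positive and negative sector families stay in opposite half-planes; everything else is the elementary fibrewise determinant identity. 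I expect the identification of the ranges as closed intervals to be the step requiring the most care.
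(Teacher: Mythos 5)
Your proof is correct and follows essentially the same route as the paper's own (much terser) argument: reduce membership in $\mathcal D$ to the fibrewise determinant of $a(K-tI)$, identify the admissible $t$ with the union of the eigenvalue ranges, and read off the sectors. The details you add --- connectedness of $M$ to turn each range $\la_i(M)$ into the full interval $[\inf\la_i,\sup\la_i]$, the $(a,b)$-coordinate verification that each $C_i$ is a proper convex sector, and the merging of overlapping intervals for the minimal decomposition --- are precisely the steps the paper leaves implicit in ``and the cones $C_i$ follow.''
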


 \begin{proof} $I $ (as well as $K$) do not belong to $\mathcal D$ and hence any element of this set has the form $a(K - tI)$. This belongs
 to $\mathcal D$ iff $\det(K(x) -tI) = 0$, for some $x$, that is $t \in \cup_i (Image (\la_i))$, and the cones $C_i$ follow.

 \end{proof}

    \subsection{Action by  homography} 
 \subsubsection{Equation}
 By  the 2-dimensional assumption, there exist $\alpha, \beta$
 such that:
 $$\rho(f)(K)= (f_*K) K = \alpha K + \beta I$$
  Equivalently, 
 \begin{equation*} \label{action}
  f_*K = \alpha  I + \beta K^{-1} 
 \end{equation*}
  Say somehow 
 formally,  $f_*K = \frac{\alpha K + \beta I}{K}$.

  Since $f_*I = I$, in the basis $\{K, I\}$,  $\rho(f): L \to f_*(L).K_f$
  has a matrix 
 $$B=  B_f= \begin{pmatrix}\alpha & 1 \\
 \beta & 0 
 \end{pmatrix}$$

\subsubsection{}
The group  
 $\GL_2(\R)$  (more faithfully $\PGL_2(\R)$) acts 
on the (projective) circle 
 $\mathbb S^1 = \bar{\R} = \R \cup \infty$, by means of the law
 $$z \to   A\centerdot  z=    \frac{a z + b }{cz+d}\;\;  \hbox{for} \;  A =  \begin{pmatrix} a &  b \\
 c & d 
 \end{pmatrix}\in \GL_2(\R)$$

In fact, we can also let $\GL_2(\R)$ act on the 
space of $(1,1)$-tensors by the same formula: $(A\centerdot X)(x)= (aX(x) + bI)(cX(x) +dI)^{-1}$.
In other words, the action is   fiberwise, and when a fiber 
$\sf{End}(T_xM)$ is identified to $\Mat_n(\R)$, then  $A \centerdot X =     \frac{a X + b }{cX+d}$

Now, the previous equation $f_*K = \frac{\alpha K + \beta I}{K}$ can be interpreted by that the linear $f_*$-action on $K$ equals 
the homographic action $A \centerdot K$ where 
 $\label{matrix}
 A=   \begin{pmatrix}\alpha &  \beta \\
 1 & 0 
 \end{pmatrix}$ is the transpose of $B$.

\subsubsection{Iteration}
We have:

\begin{equation}
  {f_*}^nK= A^n\centerdot  K,  \;  \hbox{for any }\; n \in \Z
  \end{equation}

This can be proved in a formal way. Let 
  $C$ be an endomorphism on an abstract algebra   $\{1, x, x^{-1}, \ldots\}$,
  such that $C(x) = \alpha + \beta x^{-1}$ (with 
  $C(1) = 1$ and $C(x^{-1}) = C(x)^{-1}$). Then, $C^n(x)= A^n \centerdot  x $,  where 
  $A = \begin{pmatrix}\alpha &  \beta \\
 1 & 0 
 \end{pmatrix}$.

\subsubsection{Significance for eigen-functions} \label{equivariance}

Let $x \in M $ and
$y = f(x)$ and denote $T= D_xf: T_x M \to T_yM$.

 The relation  
 $f_*K= \alpha I + \beta K^{-1}$  means that $  T^{-1} K_y T = \alpha  + \beta K_x^{-1}$. 
 This  implies  in particular that 
$T$ maps an invariant subspace of $(T_xM, K_x)$ to an invariant 
subspace of $(T_yM, K_y)$. If $E_\lambda (x) \subset T_xM$ is the (generalized) $K_x$-eigenspace associated to   $\la$, then 
$T$ maps it to $E_{ A^{-1} \centerdot \la}(y)\subset T_y M$. 

Let $x \to Sp(x) \subset \C$  be the multivalued spectrum function of $K$, that is   $Sp(x) \subset \C$ is the set of eigenvalues of $K_x$. Then the image $A \centerdot Sp(f(x))$
(of the subset $Sp(f(x))$  under the homography $A^{-1}\centerdot$) equals
$Sp(x)$, and so $$
Sp(f(x)) = A^{-1}\centerdot Sp(x)$$

Also, if $\la: M \to \C$ is a continuous $K$-eigen-function, that is $\la(x) \in Sp(x)$ for any $x \in M$ and $\la$ is continuous,
 then $$x \to  \la^\prime (x)= A^{-1} \centerdot \la(f^{-1}(x)) $$ is another continuous $K$-eigen-function.

\subsection{Classification of elements of $\SL_2(\R)$} \label{classification.elements} 
Recall   that non trivial elements $A$ of $\SL_2(\R)$ split into three classes:

\begin{enumerate}

\item  Elliptic: $A$  is  conjugate in $\SL_2(\R)$ to a rotation, i.e. an element of $\SO(2)$. Its homographic 
action on $\bar{\R} = \R \cup \infty$, as well as on $\bar{\C} = \C \cup \infty$ is conjugate to a rotation.

\item Parabolic: $A$ is unipotent, i.e. $(A-1)$ is nilpotent.  Its homographic action on $\bar{\R}$ as well as on $\bar{\C}$ is conjugate 
to a translation. It  has a unique fixed point $F_A \in \bar{\R}$. Up to conjugacy
$F_A = \infty$, and $A\centerdot z \to z +a$, where $a \in \R$.

It follows that if $C \subset \R$ is a bounded  $A\centerdot$-invariant set, then $C= \{F_A \}$ (and necessarily $F_A \neq \infty$).

\item Hyperbolic: A has two fixed points $F_A^-$ and $F_A^+$. Up to conjugacy, 
$F_A^- = 0$, $F_A^+ = \infty$, and $f(x) = a x$, with $0<a<1$. 
\end{enumerate}

Now, if $A \in \GL_2^+(\R)$, its homographic action coincides with that of $\frac{A}{\det  A}$, and the same classification applies.

\section{Riemannian metrics: non-hyperbolic cases}
\label{non.hyperbolic}

$(M, g)$ is  here a compact Riemannian manifold, and $f$ as in the previous section a chosen element such that 
$\{ K_f, I \}$ generate $\L(M, g)$, and $f$ is not affine. 

In this Riemannian setting,  all elements of 
$\L(M, g) $ are diagonalizable (since self-adjoint).

Let $G  = \rho(\Pro(M, g))$, and  $G^+ = G \cap \GL_2^+(\R)$.

Let $  A = \rho(h) \in G^+$  be non-trivial,  then $K_h$ is not collinear to $I$.
Indeed,  assume  by contradiction that $K_h = a I$, then recall that $h_*g(., .) = g(S_h., .)$ and  $S_h=  \frac{K_h^{-1}}{ \det K_h}$. 
It follows that $S_h$ has the form $S_h = bI$, but equality of volumes of $(M, g)$ and $(M,  h_*g)$ implies $b = 1$  
and hence also $K_h=  I$ (that is $h \in \Iso(M, g)$).  
Thus, 
$\rho(h) I = I$.  

On the other hand, by  Fact \ref{sectors}, $\rho(h)$ preserves a finite set of lines, all different from $\R I$.   Let  $l_1^+$ and $l_2^+$ be 
the two nearest 
half lines to $\R^+ I$.  If $\rho(h) \neq Id$, then necessarily $\rho(h) l_1^+ = l_2^+$, but this implies $\rho(h)$ is a reflection which contradicts our hypothesis $\det \rho(h) >0$. 

\subsubsection{$G^+$ can not contain parabolic elements.}  Assume by contradiction that  $\rho(h)$ is parabolic with fixed point $F_h$. Then, $F_h$ is the unique real spectral value of $K_h$ (because there is no other bounded set of $\R$ invariant under the associated homography),  and thus $K_h$ is proportional to $I$ (since it is diagonalizable), which we have just proved to be impossible.
 
\subsubsection{Case where all elements of $G^+$ are elliptic}  \label{all.elliptic}

Recall that we have a union of $k \leq \dim M$  disjoints sectors $D_i$, such that 
   $ \mathcal D = \cup_{i= 1}^{i= k} (D_i \cup -D_i)$ is $G$-invariant (Fact \ref{sectors}). If $k >1$, then the stabilizer of $\mathcal D$ in $\SL_2(\R)$ is compact, and we can assume 
   $G$ is a subgroup of $\sf{O}(2)$.  
   
   $G^+$ ($= G \cap \GL_2(\R)$) is a finite subgroup of $\SO(2)$ and hence cyclic.  
   Now, if a rotation   preserves  a set of $k$-disjoint sectors, then it has order $\leq 2k$. We will observe in our case that this order is in fact 
   $\leq k$. 
   
   However, in our case, we know that any  $\rho(h)  \in G^+$ is $\neq - Id$ (since otherwise $-I= \rho(h) I = h_*(I) K_h = K_h$, and hence $K_h = -I$ which we have already excluded).
   Say, in other words we can see the rotation acting on the projective space rather than the circle and get exactly $k$-sectors and deduce that  actually, $G^+$ has order $\leq k$. 
   
As for  $G$ (if strictly bigger than $G^+$), it is   dihedral of order $\leq 2 k$.
   
   Finally,  in the case $k = 1$,    that is $\mathcal D = D_1 \cup -D_1$, its  stabilizer in  $\SL_2(\R)$ contains $-Id$ together with 
   a one parameter hyperbolic group. So, if we assume all elements of $G^+$ elliptic, we get $G^+ =  \{1\}$.  In this case, $G$ itself reduces to a single reflection (if non-trivial).

   \subsubsection{About  $\Iso(M, g)$} Observe first that if $h \in \Aff(M, g)$, then necessarily $K_h$ is proportional to $I$ since 
   otherwise $K_h$ will be a  combination with constant coefficients of $I$ and
   $K_h$, and thus has constant eigenvalues, and therefore $f \in \Aff(M, g)$ (see  \ref{Bol-Mat, Sin}) contradicting our hypothesis. As  observed previously  $K_h = I$, 
   that is $h \in \Iso(M, g)$ and so $\Aff(M, g) = \Iso(M, g)$.
   
   On the other hand, if $h \in  \Iso(M, g)$, and $\rho(h) \in G^+$, then $\rho(h) = Id$.  
   
   In general,  if $\rho(h)  \neq Id$, then 
    it is a reflection since $\rho(h^2) = Id$. 
   
   Let $\Iso^{(2)}(M, g)$ be the normal subgroup 
   of $\Iso(M, g)$ generated by squares $h^2, h \in \Iso(M, g)$.  Then:
   
   - either  $\ker \rho = \Iso(M, g)$, 
   
   - or $\ker \rho = \Iso^{(2)}(M, g)$, and this has index 2 in $\Iso(M, g)$.
   
 - in all cases, $\Iso(M, g)$ or $\Iso^{(2)}(M, g)$ is normal in $\Pro(M, g)$, and the corresponding  quotient 
 is cyclic of order $\leq  \dim M$, or dihedral of order $\leq 2 \dim M$.

\section{Riemannian metrics, Hyperbolic case}
\label{hyperbolic}

In the present section,  
  $(M, g)$ is  a compact Riemannian manifold
with $\dim \L(M,g)= 2$ and $f \in \Pro(M, g)$   is such that $\rho(f)$ is hyperbolic. 

The final goal (of the section) is to prove that
$(M, g)$ is projectively flat. This will be done by proving the vanishing of its Weyl projective tensor $W$
(recalled below). For this, one  iterates a vector $z = W(u, v)w$ by the $Df$-dynamics to get 
a sequence $z_n = Df^nz = W (Df^n u, Df^n v)Df^n w$, and shows that it has two different growth rates
(when $n \to \pm \infty$) unless $z = 0$.

\subsection{Size of the spectrum}

\begin{fact} The homography $A \centerdot$ defined by $\rho(f)$ has two real finite fixed points 
$\la_-  < \la_+$. 

 $K = K_f$ has exactly one non-constant eigen-function $\la$. It has  multiplicity   1 
(at generic points), 
range the interval  $[\la_-, \la_+]$, and satisfies the equivariance: $\la(f(x)) = A^{-1}\centerdot \la(x)$.

The full spectrum of $K$ may be $\{\lambda_-, \lambda, \lambda_+ \}$, 
$\{\lambda_-, \lambda\}$ or $\{\lambda, \lambda_+ \}$. We denote the multiplicities
of $\lambda_-$ and $\lambda_+$ by $d_-$ and $d_+$ respectively, and hence $\dim M = 1 + d_1 +d_+$.

\end{fact}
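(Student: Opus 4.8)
The plan is to read everything off the hyperbolic homography $A\centerdot$ and the $\rho(f)$-invariance of the degenerate cone $\mathcal D$, and then to promote these statements to statements about the individual eigenvalue functions of $K=K_f$. I would begin with the fixed points. Solving $A\centerdot z=z$ for $A=\begin{pmatrix}\alpha&\beta\\1&0\end{pmatrix}$ reduces to the quadratic $z^{2}-\alpha z-\beta=0$, whose two roots are finite; hyperbolicity makes them real and distinct, and I label them $\la_-<\la_+$. (Consistently, the fixed directions of $\rho(f)$ in $\L(M,g)$ are the eigen-tensors $K-\la_\pm I$, which lie in $\mathcal D$, whereas $\R I\notin\mathcal D$, so $\infty$ is never fixed.) To identify $\la_\pm$ with the extreme spectral values I would use \S\ref{equivariance}: from $Sp(f(x))=A^{-1}\centerdot Sp(x)$ and the surjectivity of $f$, the total spectrum $\Sigma=\bigcup_{x}Sp(x)$ is a compact $A\centerdot$-invariant subset of $\R$. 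Since $f$ is not affine, $K$ is not parallel, so some eigenvalue function is non-constant and $\Sigma$ contains a non-degenerate interval; a compact interval invariant under a hyperbolic homography must carry its endpoints to the fixed points, whence $\Sigma\subset[\la_-,\la_+]$ with $\la_\pm\in\Sigma$.

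Next I would pass to the individual eigenfunctions. Because $\rho(f)$ permutes the finitely many continuous eigenfunctions (\S\ref{equivariance}), each eigenvalue range is invariant under a suitable power $A^{-m}\centerdot$, and the same endpoint argument forces every \emph{non-constant} eigenfunction to have range exactly $[\la_-,\la_+]$ and every \emph{constant} eigenvalue to equal $\la_-$ or $\la_+$; in particular the whole spectrum lies in $[\la_-,\la_+]$. Since $A^{-1}\centerdot$ restricts to an increasing homeomorphism of the arc $[\la_-,\la_+]$, the permutation it induces on the ordered eigenfunctions $\la_1\le\cdots\le\la_d$ preserves order and is therefore the identity; hence each eigenfunction individually satisfies the equivariance $\la(f(x))=A^{-1}\centerdot\la(x)$. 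Multiplicity one for a non-constant eigenvalue I would extract directly from the linearization equation of \S\ref{linear}: evaluating $g((\nabla_vL)v,v)$ and $g((\nabla_vL)v',v')$ on unit $K$-eigenvectors $v,v'$ of one eigenvalue $\la$ shows that $d\la$ annihilates the whole $\la$-eigenspace once its dimension is $\ge 2$, so a multiplicity $\ge 2$ eigenvalue is constant; equivalently, non-constant eigenvalues are simple on the dense open set where the spectrum is locally simple.

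The main obstacle is the \emph{uniqueness} of the non-constant eigenvalue, and this is precisely the point where hyperbolicity, not merely $\dim\L(M,g)=2$, must intervene: on a compact surface the classical Dini metrics exhibit two non-constant eigenvalues coexisting when the projective map is elliptic. I would argue by contradiction. Conjugating the hyperbolic homography to $z\mapsto r z$ ($|r|\neq1$) by a homeomorphism $\phi$ of $[\la_-,\la_+]$, two distinct non-constant eigenfunctions $\la,\mu$ produce $\widetilde\la=\phi\circ\la$ and $\widetilde\mu=\phi\circ\mu$ with $\widetilde\la\circ f=r\,\widetilde\la$ and $\widetilde\mu\circ f=r\,\widetilde\mu$, so the ratio $\widetilde\mu/\widetilde\la$ is a continuous $f$-invariant function. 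The delicate step is to force this ratio to be constant by combining compactness with the contraction/expansion of $f$ (for instance through a Krylov--Bogolyubov invariant measure and Poincar\'e recurrence, while controlling the degenerate loci $\{\la=\la_\pm\}$ on which $\phi$ blows up); constancy of the ratio makes $\mu$ a function of $\la$, contradicting the functional independence of distinct non-constant eigenvalues of a BM-structure \cite{Bol-Mat}.

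Granting this, with a single non-constant eigenvalue $\la$ of multiplicity one and all remaining eigenvalues constant and equal to $\la_-$ or $\la_+$ with respective multiplicities $d_-$ and $d_+$, counting dimensions gives $\dim M=1+d_-+d_+$, and the three admissible spectra $\{\la_-,\la,\la_+\}$, $\{\la_-,\la\}$, $\{\la,\la_+\}$ correspond exactly to whether $d_-$ and $d_+$ are positive. I expect the first three paragraphs to be essentially bookkeeping once the homography and the Sinjukov-type identity are in hand, and the whole weight of the argument to fall on making the ratio-constancy step of the last paragraph rigorous.
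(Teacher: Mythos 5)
Your opening paragraphs reproduce the paper's own argument (each ordered eigenfunction, after passing to a power of $f$, is an equivariant map from $(M,f)$ to $(\R, A^{-1}\centerdot)$, so its image is a bounded $A^{-1}\centerdot$-invariant interval, hence $\{\la_-\}$, $\{\la_+\}$ or $[\la_-,\la_+]$), and you correctly locate the crux at the uniqueness of the non-constant eigenfunction. But precisely there your proposal has a genuine gap, and the route you sketch cannot be made to work. Your plan is to force the continuous $f$-invariant ratio $\widetilde\mu/\widetilde\la$ to be constant by Krylov--Bogolyubov plus Poincar\'e recurrence. Note however that, after replacing $f$ by $f^{-1}$ if necessary, $\la\circ f\le\la$ pointwise, with equality exactly on $\mathcal S_-\cup\mathcal S_+=\{\la=\la_\pm\}$ (the endpoints are the only fixed points of the homography on $[\la_-,\la_+]$); integrating the nonnegative function $\la-\la\circ f$ against any $f$-invariant probability measure $\nu$ gives $\la\circ f=\la$ $\nu$-a.e., so \emph{every} invariant measure is supported in $\mathcal S_-\cup\mathcal S_+$ --- the very locus where your conjugacy $\phi$ blows up and the ratio is undefined. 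Recurrence therefore gives no information on the open set where the ratio lives. Nor is constancy plausible without some transitivity: in the paper's own two-dimensional hyperbolic analysis (Section 7), $f$ can be normalized to fix each geodesic ray through its fixed point $x_0$, and then the angular coordinate $\theta$ is a non-constant continuous $f$-invariant function on $M\setminus(\mathcal S_-\cup\mathcal S_+)$; for such gradient-like dynamics invariant continuous functions abound, so nothing dynamical forces $\widetilde\mu/\widetilde\la$ to be constant. Finally, the ``functional independence of distinct non-constant eigenvalues'' you invoke from \cite{Bol-Mat} is not a theorem there in the form you need (on a triaxial ellipsoid both eigenfunctions of the Dini form are non-constant, so no such independence statement can rule out coexistence in general).

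What the paper actually uses at this point is the Matveev--Topalov theorem \cite{Mat5}, quoted immediately after the Fact: for $L\in\L(M,g)$ on a complete Riemannian manifold, any two ordered eigenfunctions satisfy $\sup\mu_i\le\inf\mu_j$ (ranges can only touch at endpoints). Combined with your own range dichotomy this closes everything at once: two non-constant eigenfunctions would both have range the nondegenerate interval $[\la_-,\la_+]$, violating the inequality; the eigenfunctions adjacent to the non-constant one are squeezed onto the constants $\la_-$ and $\la_+$, giving the three admissible spectra and $\dim M=1+d_-+d_+$; and generic multiplicity one is immediate, which makes your separate PDE detour through the linearization equation (a correct but only sketched local fact) unnecessary. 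You rightly sensed that hyperbolicity must intervene, but its sole role is to force the full range $[\la_-,\la_+]$ for every non-constant eigenfunction; the uniqueness itself is a global geometric property of $\L(M,g)$ with no dynamics in it, and without this stacking inequality your argument does not close.
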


\begin{proof} Let $\mu_1 (x)    \leq \ldots  \leq \mu_d(x) $ be the eigenfunctions (with multiplicity) of $K(x)$. From \ref{equivariance}, the 
map $\mu_i^\prime: x \to A^{-1} \centerdot \mu_i(f^{-1}(x)) $ is another eigenfunction and hence equals some 
$\mu_j$. Taking a power of $f$, we can assume $\mu_i^\prime = \mu_i$, that is $\mu_i(f^{-1}(x)) = A \centerdot \mu_i(x)$. In other words, 
$\mu_i$ is an equivariant map between the two systems $(M, f)$ and $(\R,  A^{-1}\centerdot)$. Thus, $Image (\mu_i)$ is a bounded
$A^{-1}\centerdot$-invariant interval. Hence $\la_\pm$ belong to $\R$ (rather than $\bar{\R}$) and the image of 
$\mu_i$ can be $\{ \la_-\}$, $\{ \la_+\}$ or $[\la_-, \la_+ ]$.  The fact that only one $\mu_i$ has range 
$[\la_-, \la_+]$ follows from the following  nice fact:
 :

\end{proof}

\begin{theorem} \cite{Mat5} Let $(M, g)$ be a complete Riemannian manifold and $L \in \L(M, g_0)$. 
Then two eigen-functions $\mu_i \leq \mu_j$ satisfy $\sup \mu_i \leq \inf  \mu_j$ 
(that is not only $\mu_i(x) \leq \mu_j(x)$, but even  $\mu_i(x) \leq \mu_j(y)$ for any $x, y \in M$).

\end{theorem}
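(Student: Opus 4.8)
The plan is to exploit the linear equation satisfied by $L$ on the open dense set $M_{\mathrm{reg}}$ where the eigenvalues of $L$ have locally constant multiplicities; there the distinct eigenvalues and their eigendistributions are smooth. Write $\Lambda=\tr L$ and let $\mu$ be an eigenvalue with local \emph{unit} eigenfield $\xi$, so $L\xi=\mu\xi$. Differentiating this along $u$ and pairing with $\xi$, and using $g(\nabla_u\xi,\xi)=0$ together with $g(L\nabla_u\xi,\xi)=\mu\,g(\nabla_u\xi,\xi)=0$, one gets $u(\mu)=g((\nabla_u L)\xi,\xi)$. Substituting $v=w=\xi$ into the linear equation of the linearization theorem then yields
$$ u(\mu)=g((\nabla_u L)\xi,\xi)=g(\xi,u)\,d\Lambda(\xi),\qquad\text{i.e.}\qquad \mathrm{grad}\,\mu=d\Lambda(\xi)\,\xi. $$
So $\mathrm{grad}\,\mu$ is parallel to $\xi$. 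Running the same computation for two orthonormal eigenfields $\xi,\eta$ of one eigenvalue gives $g((\nabla_u L)\xi,\eta)=0$, and the linear equation with $u=\xi$ forces $d\Lambda(\eta)=0$ (and symmetrically $d\Lambda(\xi)=0$); hence an eigenvalue of multiplicity $\geq 2$ is \emph{locally constant}, while a simple eigenvalue has its gradient along its own eigenvector. This dichotomy is the engine of the proof.

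Next I would record the separation consequences. For two simple eigenvalues $\mu_i\neq\mu_j$ near a point, $\mathrm{grad}\,\mu_i$ and $\mathrm{grad}\,\mu_j$ lie in the orthogonal eigenlines $\R\xi_i$ and $\R\xi_j$, so $\xi_j(\mu_i)=g(\mathrm{grad}\,\mu_i,\xi_j)=0$: each eigenfunction is constant along every foreign eigendirection. Along an integral curve of $\xi_i$ therefore only $\mu_i$ varies, whence $\xi_i(\Lambda)=\xi_i(\mu_i)$ and the gradient formula sharpens to $\mathrm{grad}\,\mu_i=(\xi_i\mu_i)\,\xi_i$. Differentiating this relation once more, using the linear equation again to express the second covariant derivative of $L$, produces a second-order ODE for the restriction of $\mu_i$ to the geodesic issued in the direction $\xi_i$ whose right-hand side depends on $\mu_i$ alone. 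This is the Liouville/Levi-Civita separation underlying projectively equivalent metrics.

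Finally I would bring in completeness. Since $(M,g)$ is complete the eigendirection geodesics extend to all of $\R$, so the one-variable ODE can be integrated globally: a nonconstant $\mu_i$ is monotone between consecutive turning points, the zeros of $\xi_i\mu_i$, and these turning values are exactly the endpoints $\inf\mu_i$ and $\sup\mu_i$, approached but never crossed. The closing step matches turning values across adjacent branches: a local extremum of $\mu_i$ forces $\mathrm{grad}\,\mu_i=0$, i.e. $d\Lambda(\xi_i)=0$; by the dichotomy and the pointwise order $\mu_i\leq\mu_{i+1}$, the only way a simple branch reaches such a turning value is at a point where it collides with the neighboring eigenvalue. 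Hence $\sup\mu_i$ is a collision value where $\mu_{i+1}=\mu_i$, and the same turning-point analysis from the $\mu_{i+1}$ side makes it the minimum of $\mu_{i+1}$, giving $\sup\mu_i\leq\inf\mu_{i+1}$.

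I expect the main obstacle to be precisely the collision locus $M\setminus M_{\mathrm{reg}}$, where eigenvalues coincide, the eigendistributions and the sorted functions $\mu_i$ lose smoothness, and all the formulas above degenerate; controlling the behaviour there, and doing it \emph{globally} in the absence of compactness, is exactly the point at which completeness and the turning-point analysis of the separated ODE must replace any compact maximum-principle argument.
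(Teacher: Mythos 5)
Your local computations are correct and standard: pairing $\nabla_u(L\xi)=u(\mu)\xi+\dots$ with the unit eigenfield and substituting into the linear equation does give $\mathrm{grad}\,\mu=d\Lambda(\xi)\,\xi$ for a simple eigenvalue, and the same computation with two orthonormal eigenfields does show that an eigenvalue of multiplicity $\geq 2$ is locally constant. But the closing step contains a genuine gap, in fact two. First, the claim that ``the only way a simple branch reaches a turning value is by colliding with the neighboring eigenvalue'' is false: the dichotomy only forbids non-constant multiple eigenvalues, it does not forbid critical points of a simple branch. The conclusion of the theorem is compatible with the strict inequality $\sup\mu_i<\inf\mu_j$, and in that situation $\mu_i$ attains its supremum at a point where it is still simple (already in Dini form on a surface: take $Y\equiv c$ constant and $X$ non-constant with range disjoint from $\{c\}$; then $X$ has an interior maximum at a simple point, with $d\Lambda(\xi)=0$ there and no contradiction). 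Second, and more structurally, the statement to be proved is a \emph{two-point} inequality, $\mu_i(x)\leq\mu_j(y)$ for arbitrary, possibly far apart $x$ and $y$, and your argument has no mechanism transporting information between distant points: the eigenfields $\xi_i$ are defined only on the regular locus, their integral curves are \emph{not} geodesics in general (so ``the geodesic issued in the direction $\xi_i$'' is already a misstep), and they can hit the collision set in finite time, where all your formulas degenerate. You acknowledge this obstacle in your last paragraph, but resolving it is not a technical remainder --- it is the entire content of the theorem.

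For comparison: the paper itself does not prove this statement; it quotes it from Matveev--Topalov \cite{Mat5}, whose argument is global and quite different from yours. For $L\in\L(M,g)$, the functions $I_t(v)=\det\bigl(L-t\,\mathrm{Id}\bigr)\,g\bigl((L-t\,\mathrm{Id})^{-1}v,v\bigr)$ are, for every $t$, first integrals of the geodesic flow of $g$. Given $x,y\in M$, completeness enters only through Hopf--Rinow, providing a geodesic $\gamma$ from $x$ to $y$ (not through completeness of eigencurves, as in your sketch). In an orthonormal eigenbasis at a point, $I_t(v)=\sum_i\prod_{j\neq i}(\mu_j-t)\,v_i^2$, so evaluating the degree-$(d-1)$ polynomial $t\mapsto I_t(\dot\gamma)$ at the ordered eigenvalues produces alternating signs; hence its roots $t_1\leq\dots\leq t_{d-1}$ interlace the spectrum, $\mu_i\leq t_i\leq\mu_{i+1}$, \emph{at every point of} $\gamma$. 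Since the polynomial is the same at both endpoints, $\mu_i(x)\leq t_i\leq\mu_{i+1}(y)$, which is the assertion. Any repair of your approach would essentially have to rediscover this integral-interlacing mechanism, since purely local ODE analysis along eigendirections cannot compare values at distant points.
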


\subsection{Dynamics of $f$} 

${}$

\label{Dynamics}

Define the singular sets $\mathcal S_ \pm= \{x \in M, \lambda(x) = \lambda_ \pm\}$.

\subsubsection{Lyapunov splitting}  \label{Lyapunov}

On $M \setminus (\mathcal S_- \cup {\mathcal S}_+)$, corresponding to the eigenspace decomposition 
of $K= K_f$,   we have a regular and orthogonal 
splitting $TM = E_- \oplus E_+\oplus  E_\lambda$.

 Due  to the relation, 
$f_* K  = \alpha I + \beta K^{-1}$, $f$ preserves this  splitting.

 \begin{remark} Even in the linear situation of a matrix $A \in \GL_d(\R)$, it is rare that $A^*A$ and its conjugate 
 $A^{-1}(A^* A) A$ have the same eigenspace decomposition!

\end{remark} 

\subsubsection{Distortion} Recall  the  definition of the $L$-strength $f_*g(., .)=
\frac{1}{\det K}g(K^{-1}., .)$ vs the ordinary strength $S =  \frac{K^{-1}}{ \det K}$.  

If $y = f(x)$ and 
$u \in T_yM$ belongs to a $\mu = \mu^S(y)$-eigenspace of $S$, then $g_x(D_yf^{-1}u,  D_yf^{-1}u) =  \mu g_y(u, u)$.  In our case,
$D_xf$ sends    $S$-eigenspaces at $x$ to $S$-eigenspaces at $y$, by applying  a similarity of ratio $\frac{1} {\sqrt{ \mu ^S(y)}}$. 

In order to compute this by means of $K$-eigenvalues, observe that 
  $$\det K(x ) =  \lambda_-^{d_-}\lambda_+^{d_+} \lambda(x)$$
 (where $d_-, d_+$ are the respective 
 dimension 
of $E_-$ and $E_+$).

Thus, for  any $x$,  $D_xf$ maps similarly $E_-(x) $ to $E_- (f(x))$ with similarity ratio $\zeta_-(x)$ such that 
$$\zeta_-^2(x) =   {(\det K (f(x)) \lambda_-} =   {(\lambda_-^{d_-} \lambda_+^{d_+} \lambda (f(x)) \lambda_-}$$

As for $D_xf: E_+(x) \to E_+(f(x))$ and $D_xf: E_\lambda(x) \to E_\lambda(f(x))$,  their respective  
 distortions are: 
 $$\zeta_+^2(x) =     {(\lambda_-^{d_-} \lambda_+^{d_+} \lambda (f(x))) \lambda_+} \; \; 
\hbox{and} \; \; \zeta_\lambda^2(x) =     {(\lambda_-^{d_-} \lambda_+^{d_+} \lambda (f(x))) \lambda(f(x))}$$

\subsubsection{Data for $f^{-1}$} Let $\la_1^*, \la_-^*, \la_+^*, \zeta_\la^*, \ldots$ be the analogous quantities corresponding to $f^{-1}$. Observe that $f^{-1}$
preserves the same Lyapunov splitting and thus $\zeta_-^*(x) \zeta_-(f^{-1}(x)) = 1$.  It follows that $\la_1^*(x) = \frac{1}{\la_1 (f^{-1}(x))}$, 
$\la_-^* =   \frac{1}{\lambda_+} $, and  $\la_+^*=   \frac{1}{\lambda_-}$.

\subsubsection{Estimation of the Jacobian}

From above we infer that:
$$(\sf{Jac}\;  f_x)^2 = (\det D_xf)^2 =    (\zeta_-(x) \zeta_\la(x) \zeta_+(x))^2 = 
  ({\lambda_-^{d_-} \lambda_+^{d_+} \lambda(f(x))})^{1+d}$$
Now, in  general,  
 $\Jac f^n_x = \Jac f_{f^{n-1}x} \ldots \Jac f_x $, and hence 
 $$(\Jac f^n_x)^2 
 =   { ((\lambda_-^{d_-} \lambda_+^{d_+})^n (\lambda (f^{n}(x)) \ldots \lambda(f(x)))})^{1+d}$$

\begin{fact} Assume that $A^{-1}\centerdot$ is  decreasing  on  $[\lambda_-, \lambda_+]$, that is $\lambda_+$
is repulsing and $\lambda_-$ is attracting, equivalently, $\la_1$ is  decreasing along $f$-orbits: $\la_1(f(x) \leq \la_1(x)$.  
Then,  
on compact sets $M  \setminus  \mathcal S_+$, 
$(\Jac f^n_x)^2$
is uniformly equivalent to $({\lambda_-^{d_-+1} \lambda_+^{d_+}})^{n(1+d)}$,
when $n \to + \infty$
  (recall that $\mathcal S_+ = \{ x \in M /  \la_1(x) = \la_+ \}$).

\end{fact}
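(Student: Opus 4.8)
The displayed formula just above the statement gives
$$(\Jac f^n_x)^2 = \bigl((\lambda_-^{d_-}\lambda_+^{d_+})^n\,\lambda(f^n(x))\cdots\lambda(f(x))\bigr)^{1+d},$$
and since $(\lambda_-^{d_-+1}\lambda_+^{d_+})^{n(1+d)}=\bigl((\lambda_-^{d_-}\lambda_+^{d_+})^n\,\lambda_-^n\bigr)^{1+d}$, the asserted equivalence is exactly the claim that the scalar product $P_n(x):=\prod_{j=1}^n\lambda(f^j(x))$ is uniformly equivalent to $\lambda_-^n$. So the plan is to discard the Jacobian and work only with this one-dimensional product, which by the equivariance $\lambda(f(x))=A^{-1}\centerdot\lambda(x)$ is governed entirely by the orbit of the single real number $\lambda(x)$ under the homography $A^{-1}\centerdot$.

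One inequality is free: $\lambda$ takes values in $[\lambda_-,\lambda_+]$, so $\lambda(f^j(x))\geq\lambda_-$ for every $j$ and hence $P_n(x)\geq\lambda_-^n$, which already gives the lower bound with constant $1$. For the upper bound I would put the hyperbolic homography in normal form: conjugating by the homography $\phi$ sending $\lambda_-\mapsto 0$ and $\lambda_+\mapsto\infty$, the hypothesis that $\lambda_-$ is attracting and $\lambda_+$ repelling turns $A^{-1}\centerdot$ into $s\mapsto as$ with $0<a<1$. Writing $w=\phi(\lambda(x))\in[0,\infty]$, the equivariance then yields the closed form $\lambda(f^j(x))=\phi^{-1}(a^j w)$, a sequence decreasing monotonically to $\lambda_-=\phi^{-1}(0)$.

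It then remains to bound $\sum_{j=1}^n\log\bigl(\lambda(f^j(x))/\lambda_-\bigr)$ independently of $n$. Since $\phi^{-1}$ is a fixed smooth function with $\phi^{-1}(0)=\lambda_-$ and bounded derivative near the origin, one has $\log\bigl(\phi^{-1}(s)/\lambda_-\bigr)=O(s)$ as $s\to 0$, while globally this quantity never exceeds $\log(\lambda_+/\lambda_-)$. I would split the sum at the index $j^\ast\approx\log w/\log(1/a)$ where $a^j w$ crosses $1$: the tail $j>j^\ast$ is dominated by the convergent geometric series $\sum_j O(a^j w)$ and is in fact uniformly bounded, while the head $j\leq j^\ast$ contributes at most $j^\ast\log(\lambda_+/\lambda_-)$. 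Exponentiating gives $\lambda_-^n\leq P_n(x)\leq\lambda_-^n e^{C}$, and raising to the power $1+d$ delivers the stated equivalence.

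The hard part is the word \emph{uniformly}. The constant $C$ produced above is controlled by the threshold $j^\ast$, which grows like $\log w$, so as $x$ approaches the repelling singular set $\mathcal S_+$ one has $w\to\infty$: orbits then linger near $\lambda_+$ for arbitrarily many steps and the comparison constant degenerates. The equivalence is therefore genuinely uniform only over compact subsets of $M\setminus\mathcal S_+$; equivalently, for each fixed $x\notin\mathcal S_+$ the normalized ratio $(\Jac f^n_x)^2/(\lambda_-^{d_-+1}\lambda_+^{d_+})^{n(1+d)}$ converges to a positive limit as $n\to+\infty$. Keeping careful track of this localisation, rather than any single estimate, is the real point.
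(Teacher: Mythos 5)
Your proposal is correct and follows essentially the same route as the paper: the paper likewise discards the Jacobian in favor of the scalar product $\lambda(f^n(x))\cdots\lambda(f(x))$ governed by the homography, and its Lemma \ref{lemma} establishes exactly your claim that this product is comparable to $\lambda_-^n$, by conjugating the hyperbolic map to a linear contraction $z\mapsto\alpha(z-\lambda_-)+\lambda_-$ and bounding the normalized product by the convergent geometric series $\prod_i e^{\vert c\vert\alpha^i}\leq e^{\vert c\vert\Sigma\alpha^i}$ --- the same mechanism as your split sum of logarithms, merely organized multiplicatively instead of additively. Your closing caveat that uniformity holds only on compact subsets of $M\setminus\mathcal{S}_+$ (constants degenerating as the orbit lingers near the repelling value) is also the paper's: its lemma asserts uniform convergence precisely on compact subsets of $[\lambda_-,\lambda_+[$.
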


The proof bases on the relation $\lambda(f^k(x)) = (A^{-1})^k \centerdot \lambda(x)$
and the next lemma.

\begin{lemma}  \label{lemma} Let $C $ be a hyperbolic element of $\SL_2(\R)$
with fixed points $\lambda_- <\lambda_+$, with $\lambda_-$ attracting. The sequence 
$$\frac{(C^n\centerdot z) (C^{n-1}\centerdot z) \ldots (C\centerdot z) }{\lambda_-^n}    $$
converges simply in 
  $ [\lambda_- \lambda_+ [$ to a continuous function.
  The convergence is uniform
  in any compact subset of $[\lambda_-, \lambda_+[$.
\end{lemma}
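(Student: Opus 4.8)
The plan is to show that the orbit $x_k = C^k \centerdot z$ tends to the attracting fixed point $\lambda_-$ at a geometric rate, and then to recognize the quotient in the lemma as a tail of an infinite product whose logarithm is an absolutely (and locally uniformly) convergent series.

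First I would normalize the dynamics. Conjugating $C$ by the homography $\phi$ sending $\lambda_- \mapsto 0$ and $\lambda_+ \mapsto \infty$ turns it into $w \mapsto a w$ with $0 < a < 1$, where $a = \mu^2$ is the (positive, real) multiplier at the attracting fixed point. Since $\lambda_- < \lambda_+$ are the only fixed points and $\lambda_-$ is attracting, the bounded arc $[\lambda_-, \lambda_+)$ is $C\centerdot$-invariant and every orbit in it converges to $\lambda_-$. In the normalized coordinate $w_0 = \phi(z)$ one has $w_k = a^k w_0$, and pulling back by the smooth map $\phi^{-1}$ (which is Lipschitz on any bounded set and sends $0$ to $\lambda_-$) yields
$$|x_k - \lambda_-| \le C(J)\, a^k,$$
valid uniformly for $z$ in any compact subinterval $J \subset [\lambda_-, \lambda_+)$; the constant $C(J)$ blows up only as $J$ approaches the repelling point $\lambda_+$, which is exactly why uniformity is asserted only on compact subsets of the half-open interval.

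Next I would pass to logarithms. Because $x_k \to \lambda_- \neq 0$, there is an $N = N(J)$, uniform over $J$, beyond which $x_k/\lambda_-$ is positive and within $\tfrac12$ of $1$. Splitting
$$\frac{(C^n\centerdot z)\cdots(C\centerdot z)}{\lambda_-^n} = \Big(\frac{x_1\cdots x_{N-1}}{\lambda_-^{N-1}}\Big)\prod_{k=N}^{n}\frac{x_k}{\lambda_-}$$
isolates a fixed continuous factor times a positive tail product (this also sidesteps any sign issue from the initial terms). For the tail, $\big|\log(x_k/\lambda_-)\big| \le 2|x_k-\lambda_-|/|\lambda_-| \le 2C(J)a^k/|\lambda_-|$, so the series $\sum_k \log(x_k/\lambda_-)$ converges absolutely and, by the Weierstrass $M$-test with the geometric majorant, uniformly on $J$. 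Exponentiating gives locally uniform convergence of the full quotient to a continuous limit on $[\lambda_-, \lambda_+)$; the uniform limit of the continuous partial products is automatically continuous, and at $z = \lambda_-$ every factor equals $1$, so the limit there is $1$.

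The one point requiring care — the main, though mild, obstacle — is the uniformity as $z$ ranges over $[\lambda_-, \lambda_+)$: the per-orbit geometric rate is uniform on compacta but degenerates near $\lambda_+$, so one must check that $N(J)$ and $C(J)$ depend only on $J$ and not on the individual $z$. This follows from the uniform convergence $x_k \to \lambda_-$ on $J$ established in the first step, which simultaneously fixes the choice of $N$ and supplies the uniform $M$-test majorant.
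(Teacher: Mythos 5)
Your proof is correct and follows essentially the same route as the paper: conjugate $C\centerdot$ to the linear contraction $w \mapsto aw$ at the attracting fixed point (the paper invokes this ``equivalence'' on compact intervals $[\lambda_-, \lambda_+-\epsilon]$ and you carry it out exactly via the homography sending $\lambda_-\mapsto 0$, $\lambda_+\mapsto\infty$), then control the product through the geometric decay of $x_k-\lambda_-$ — your logarithm/$M$-test step is the same estimate the paper writes directly as $\prod_i(1+c\alpha^i)\le e^{|c|\sum\alpha^i}$. Your version is somewhat more careful than the paper's terse sketch (exact conjugation, the sign of early factors, uniformity of $N(J)$ and $C(J)$ on compacta), but there is no difference in method.
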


\begin{proof} In a small neighbourhood of $\lambda_-$, the $C\centerdot$-action  is equivalent    to 
a linear contraction fixing $\la_-$,  $h: z \to \alpha (z-\lambda_-)+ \lambda_-$, with 
$0 < \alpha <1$. This equivalence is valid also on any compact interval $[\lambda_- , \lambda_+ - \epsilon]$. 
 Thus $h^nz= \alpha^n(z-\lambda_-) + \lambda_-$. The above 
product is $(c\alpha^n + 1) (c\alpha^{n-1} + 1) \ldots (c\alpha +1)(c+1)$,
where $c = \frac{z-\lambda_-}{\lambda_-}$. This product is convergent since it can be bounded 
by $\Pi_{i=0}^{i=n} (e^{\vert c \vert \alpha^i}) \leq e^{\vert c\vert (\Sigma \alpha^i)}$.
\end{proof}

\begin{corollary}  \label{cor} Keep the assumption $A^{-1} \centerdot $  
decreasing. Then   
$(\lambda_-^{d_-}\lambda_+^{d_+}) \lambda_-  \leq1$ and 
$(\lambda_-^{d_-}\lambda_+^{d_+}) \lambda_+  \geq 1$. (In particular $\la_- < 1 < \la_+$).

\end{corollary}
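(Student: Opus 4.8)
The plan is to extract the two inequalities from the competition between the exponential growth rate of $\Jac f^n$ and the constraint that $f$, being a diffeomorphism of a compact manifold, preserves total volume. Write $P = \lambda_-^{d_-}\lambda_+^{d_+}$ for brevity. Since $A^{-1}\centerdot$ is assumed decreasing, $\lambda_-$ is attracting, and for generic $x$ one has $\lambda(f^k(x)) = (A^{-1})^k\centerdot\lambda(x)$; Lemma \ref{lemma}, applied with $C = A^{-1}$ and $z = \lambda(x)$, produces a continuous limit $\ell(\lambda(x)) = \lim_n \lambda_-^{-n}\prod_{k=1}^n\lambda(f^k(x))$ with $\ell > 0$ on $\{\lambda < \lambda_+\}$, the convergence being uniform on compacta of $[\lambda_-,\lambda_+)$. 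Substituting into the Jacobian formula of the previous subsection gives the sharp asymptotic $(\Jac f^n_x)^2 \sim (P\lambda_-)^{n(1+d)}\,\ell(\lambda(x))^{1+d}$ as $n\to+\infty$, which is exactly the Fact. Thus the pointwise growth rate of $\Jac f^n_x$ is governed by the single number $P\lambda_-$.

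First I would prove $P\lambda_-\le 1$. As $f$ is a diffeomorphism of the compact manifold $M$, the change of variables formula gives $\int_M \Jac f^n_x\,dv_g = \mathrm{vol}(M,g)$ for every $n$. If instead $P\lambda_->1$, then on the nonempty open set $\{\lambda < \lambda_+\}$, which has positive measure since $\lambda$ is non-constant, the integrand tends to $+\infty$ (there $\ell(\lambda(x))>0$). Fatou's lemma then forces $\mathrm{vol}(M,g) = \liminf_n\int_M\Jac f^n_x\,dv_g \ge \int_M\liminf_n\Jac f^n_x\,dv_g = +\infty$, a contradiction. Hence $P\lambda_-\le 1$, that is $(\lambda_-^{d_-}\lambda_+^{d_+})\lambda_-\le 1$.

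The second inequality $P\lambda_+\ge 1$ follows by applying the first one to $f^{-1}$. For $f^{-1}$ the eigenvalue dynamics is $A\centerdot$, whose attracting fixed point is $\lambda_+$; correspondingly the attracting constant eigenvalue of $K_{f^{-1}}$ is $\lambda_-^\ast = 1/\lambda_+$, the repelling one is $\lambda_+^\ast = 1/\lambda_-$, and the multiplicities swap, $d_-^\ast = d_+$ and $d_+^\ast = d_-$ (cf. the data recorded for $f^{-1}$). The first inequality applied to $f^{-1}$ reads $P^\ast\lambda_-^\ast\le 1$ with $P^\ast = (\lambda_-^\ast)^{d_-^\ast}(\lambda_+^\ast)^{d_+^\ast} = (1/\lambda_+)^{d_+}(1/\lambda_-)^{d_-} = 1/P$; since $\lambda_-^\ast = 1/\lambda_+$ this says $1/(P\lambda_+)\le 1$, i.e. $P\lambda_+\ge 1$. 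Finally, in the Riemannian setting $K_f$ is positive definite, so $0<\lambda_-<\lambda_+$; combining this with $P\lambda_-\le 1\le P\lambda_+$ excludes $\lambda_-\ge 1$ (which would make every factor of $P\lambda_-$ at least $1$ and at least one of them strictly greater) and symmetrically $\lambda_+\le 1$, giving $\lambda_-<1<\lambda_+$.

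The main obstacle is the passage from the pointwise asymptotic to the volume constraint in the second paragraph: the convergence in Lemma \ref{lemma} is only uniform on compacta of $[\lambda_-,\lambda_+)$ and degenerates on the repelling singular set $\mathcal S_+$, so no dominated-convergence majorant is available (indeed, when $P\lambda_-<1$ the volume is expected to concentrate near $\mathcal S_+$). The correct tool is therefore the one-sided Fatou inequality, which requires only that the nonnegative integrand blow up on a set of positive measure; checking that $\{\lambda<\lambda_+\}$ has positive measure, immediate since it is open and nonempty, is precisely what makes the argument close.
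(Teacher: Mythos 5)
Correct, and essentially the paper's own proof: like the paper, you obtain the first inequality by playing the asymptotic $\Jac f^n_x \approx (\lambda_-^{d_-+1}\lambda_+^{d_+})^{n(1+d)/2}$ (the Fact, via Lemma \ref{lemma}) against the finiteness of $\mathrm{vol}(M,g)$, and the second by applying the first to $f^{-1}$, whose data $\lambda_-^{*}=1/\lambda_+$, $\lambda_+^{*}=1/\lambda_-$, $d_{\pm}^{*}=d_{\mp}$ you correctly check to satisfy the same decreasing hypothesis --- exactly the paper's two steps. Your Fatou-plus-change-of-variables paragraph merely makes precise what the paper compresses into ``$\int \Jac f^n_x \to \infty$ contradicting finite volume'', so the proposal matches the paper's route with added rigor.
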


\begin{proof} By the fact above, if ${\lambda_-^{d_-+1} \lambda_+^{d_+ }} >1$, then 
  $\int_{M  \setminus \mathcal S_-} \Jac f_x^n \to \infty$ when $n \to + \infty$ contradicting  
  that  $M$ has a finite volume. 
  
  The other inequality holds 
  by applying the previous fact  to $f^{-1}$. Observe for this, that indeed the eigen-function  $\la_1^*$
  corresponding to $f^{-1}$ verifies the same decreasing hypothesis. \end{proof} 
 \subsubsection{Justification of the  decreasing hypothesis for  $A^{-1} \centerdot$} Let us see what happens
 if   $A^{-1}\centerdot$ was increasing in $[\la_-, \la_+]$. In this case, the volume estimate would give
 $(\lambda_-^{d_-}\lambda_+^{d_+}) \lambda_+  \leq1$ and 
$(\lambda_-^{d_-}\lambda_+^{d_+}) \lambda_-  \geq 1$, which leads to the contradiction
$\la_+ \leq  \la_-$.

\subsection{The projective Weyl tensor} 
This is a $(3, 1)$-tensor
  $W: TM \times TM \times TM \to TM$, that is 
  invariant under $\Pro(M, g)$:    $W(D_xfu, D_xfv, D_xfw) = D_xf(W(u, v, w))$, for any $u, v, w \in TM$ 
  (and any $f \in \Pro(M, g)$) . In dimension $\geq 3$,  its 
  vanishing is the obstruction to projective flatness of $(M, g)$, that is the fact that $(M, g)$ has a constant sectional curvature. 
 
Unlike  the conformal case, the projective Weyl tensor   is not a curvature tensor, that is it does not satisfy  all the usual 
  symmetries of   curvature tensors  (see for instance \cite{Besse, Eastwood} for more information).  Its true definition is as follows. If 
  $u, v, w, z$ are four vectors in $T_xM$ such that any two of them are either equal or orthogonal (that is they are part of an orthonormal basis), then:
  \begin{equation} \label{definition.Weyl}
g_x(W(u, v, w),   z) = g_x( R(u, v)w, z) - \frac{1}{n-1} ({\delta}^z_v Ric(w, u) - \delta^z_u Ric(w, v) )
\end{equation}
  where $Ric$ is  the Ricci tensor and $\delta$ is the Kronecker symbol.

\subsubsection{Boundedness}  By compactness,  $W$ is bounded by means of $g$, that is $\parallel W(u, v, w) \parallel \leq C \parallel u \parallel \parallel v \parallel \parallel w \parallel $, for some constant $C$,  
where $\parallel . \parallel$ is the norm associated to $g$.

\subsubsection*{Notation $\approx$ } We will deal with sequences of positive  functions $A(n, x)$ and numbers $a_n$. We will say they have the same growth rate for $x $ in a compact set  $K \subset M$,  and write
$A(n, x) \approx a_n$, if the ratio $\frac{A(n, x)} {a_n}$ belongs to an interval $[\alpha, \beta]$, $0 < \alpha  \leq \beta < \infty$, when $x \in K$. In general, $K$ is given by the context and hence omitted.

\subsubsection{Asymptotic growth under $Df$}
\label{growth}
The goal in this  paragraph  is to estimate the asymptotic  behaviour under the tangent dynamics $Df$  of vectors
in each of of the Lyapunov spaces (\S \ref{Lyapunov})

\medskip
\noindent
{\it Case of $E_-$:}
 
$D_xf^n$ maps similarly $E_-(x) $ to $E_-(f^n(x))$
with a  contraction factor 
$$\zeta_-(n, x) = \zeta_-(x) \zeta_-(f(x)) \ldots \zeta_-(f^{n-1}(x))$$ 
More concretely, 
\begin{equation*}
\hbox{ If }  u \in E_-(x), \hbox{then} 
 \parallel D_xf^nu \parallel  =  \zeta_-(n, x) \parallel u \parallel 
\end{equation*}

  Recall  that $\zeta_-(x) =      ({(\lambda_-^{d_-} \lambda_+^{d_+} \lambda (f(x)) \lambda_-})^{1/2}$. If $x \in M \setminus \SS_+$, i.e.
  $\la(x) < \la_+$, then, by Lemma \ref{lemma}, $\zeta_-(n, x) $  grows as 
  $$ \zeta_-(n, x) \approx {(\la_-^{d_-+2} \la_+^{d_+})^{n/2}}, \; n \to + \infty$$

 \bigskip

\noindent
{\it  Case of $E_+$ and $E_\la$:}

  On  defines in the same way $\zeta_+(n, x)$ and $\zeta_\la(n, x)$ as distortion factors of $D_xf^n$ form $E_+(x)$ to $E_+(f^n(x))$ 
 and from $E_\la(x) $ to $E_\la(f^n(x))$),  respectively. One gets when $n \to \infty$
 $$     \zeta_+(n, x) \approx  {(\la_-^{d_-+1} \la_+^{d_++1})^{n/2}} $$
 $$ \zeta_\la(n, x) \approx  {(\la_-^{d_-+2} \la_+^{d_+})^{n/2}} 
 \approx 
 \zeta_-(n,x)$$

In sum, when $n \to + \infty$, 
\begin{eqnarray*}
\hbox{ If }  u \in E_-(x), &
 \parallel D_xf^nu \parallel  =  \zeta_-(n, x) \parallel u \parallel  & \approx {(\la_-^{d_-+2} \la_+^{d_+})^{n/2}} \parallel u \parallel  \\
 \hbox{ If }  u \in E_\la(x), &
 \parallel D_xf^nu \parallel  =  \zeta_\la(n, x) \parallel u \parallel  & \approx {(\la_-^{d_-+2} \la_+^{d_+})^{n/2}} \parallel u \parallel \\
 \hbox{ If }  u \in E_+(x), & 
 \parallel D_xf^nu \parallel  =  \zeta_+(n, x) \parallel u \parallel 
 & \approx  {(\la_-^{d_-+1} \la_+^{d_++1})^{n/2}} 
 \parallel u \parallel 
\end{eqnarray*}

\bigskip

Observe that all these behaviours are uniform on  compact sets of $M \setminus \mathcal S_+$. Also,  observe that, 
surely, $\zeta_-(n,x)$ is exponentially decreasing. Indeed, if not,   $ \la_-^{d_-+2} \la_+^{d_+} \geq 1$, and since $\la_- < \la_+$, 
$\zeta_+(x, n) $  would be exponentially increasing. 
On the other hand, the Jacobian of $Df^n$ is at least equivalent  to  a power of 
$\zeta_+(x, n)$ (since $\zeta_-$ and $\zeta_\la$ are bounded from below).

\subsubsection{A first vanishing}

\begin{fact} For any $x \in M \setminus (\SS_- \cup \SS_+)$, $W(u, v, w) = 0$ once all $u, v, w$ belong
to  $E_-(x) \oplus E_\lambda(x)$, or all belong
to $E_+x) \oplus E_\lambda(x)$.

\end{fact}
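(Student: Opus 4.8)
The plan is to run the standard dynamical vanishing argument: a bounded tensor invariant under $f$ that the differential $D_xf^n$ contracts faster than it can scale any of its own components must vanish. I would fix $x\in M-(\mathcal{S}_-\cup\mathcal{S}_+)$, so that the orthogonal splitting $T_xM=E_-(x)\oplus E_+(x)\oplus E_\lambda(x)$ is defined and $\lambda_-<\lambda(x)<\lambda_+$; since $\lambda_-$ is attracting, $\lambda(f^k(x))\to\lambda_-$ and the forward orbit stays in a compact subset of $(\lambda_-,\lambda_+)$, so the distortion estimates of \S\ref{growth} apply along the orbit. Iterating the invariance $W(D_xf\,u,D_xf\,v,D_xf\,w)=D_xf\,W(u,v,w)$ gives, for every $n$,
\[
W(D_xf^n u,D_xf^n v,D_xf^n w)=D_xf^n\,W(u,v,w).
\]
I set $a=\lambda_-^{d_-+2}\lambda_+^{d_+}$; by Corollary \ref{cor}, $a=\lambda_-\cdot(\lambda_-^{d_-+1}\lambda_+^{d_+})\le\lambda_-<1$, so the input directions contract exponentially.

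For the left-hand side I would use that if $u,v,w\in E_-(x)\oplus E_\lambda(x)$ then, since $\zeta_-(n,x)$ and $\zeta_\lambda(n,x)$ are both $\approx a^{n/2}$ (\S\ref{growth}) and the image splitting is orthogonal, $\|D_xf^n u\|\lesssim a^{n/2}\|u\|$ and likewise for $v,w$; the boundedness of $W$ then gives
\[
\|D_xf^n\,W(u,v,w)\|=\|W(D_xf^n u,D_xf^n v,D_xf^n w)\|\lesssim a^{3n/2}.
\]
For the right-hand side I would decompose $W(u,v,w)=W_-+W_++W_\lambda$ in the splitting at $x$. Since $D_xf^n$ scales these summands by the similarity ratios $\zeta_-(n,x),\zeta_+(n,x),\zeta_\lambda(n,x)$ and sends them to mutually orthogonal spaces,
\[
\|D_xf^n W(u,v,w)\|^2=\zeta_-(n,x)^2\|W_-\|^2+\zeta_+(n,x)^2\|W_+\|^2+\zeta_\lambda(n,x)^2\|W_\lambda\|^2 .
\]

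The crucial point is that \emph{every} one of the three ratios is bounded below by $\approx a^{n/2}$: one has $\zeta_-(n,x),\zeta_\lambda(n,x)\approx a^{n/2}$, while pointwise $\zeta_+(f^k x)\ge\zeta_-(f^k x)$ (their squares differ by the factor $\lambda_+/\lambda_->1$, by the formulas of \S\ref{Dynamics}), whence $\zeta_+(n,x)\ge\zeta_-(n,x)\approx a^{n/2}$. Comparing the two estimates, for each component $\bullet\in\{-,+,\lambda\}$ I obtain
\[
\|W_\bullet\|\;\lesssim\;\frac{\|D_xf^n W(u,v,w)\|}{\zeta_\bullet(n,x)}\;\lesssim\;\frac{a^{3n/2}}{a^{n/2}}=a^{n},
\]
which tends to $0$ as $n\to\infty$ since $a<1$; hence $W_-=W_+=W_\lambda=0$ and $W(u,v,w)=0$. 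The symmetric statement on $E_+(x)\oplus E_\lambda(x)$ then follows by applying the identical argument to $f^{-1}$, which (by \S\ref{Dynamics} and the proof of Corollary \ref{cor}) again satisfies the decreasing hypothesis, now with $\lambda_+$ attracting, so that $E_+\oplus E_\lambda$ becomes the strongly contracted subspace. The only real content is the rate bookkeeping: one must check that even the slowest-contracting output direction $E_+$ is still scaled by at least the input rate $a^{n/2}$ (the inequality $\zeta_+\ge\zeta_-$) and that the input contraction is genuinely exponential with $a<1$ — which is exactly what Lemma \ref{lemma} and Corollary \ref{cor} supply; everything else is assembly.
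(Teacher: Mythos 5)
Your proof is correct and takes essentially the same route as the paper's: iterate the $f$-equivariance of $W$, bound $\|W(D_xf^nu,D_xf^nv,D_xf^nw)\|\lesssim a^{3n/2}$ using boundedness of $W$ and the rates of \S\ref{growth}, compare with the minimal possible collapse rate $a^{n/2}$ of a nonzero vector of $T_xM$, and apply the same argument to $f^{-1}$ for $E_+\oplus E_\lambda$. Your componentwise bookkeeping (the explicit check $a\le\lambda_-<1$ via Corollary \ref{cor}, the inequality $\zeta_+\ge\zeta_-$, and the restriction to the regular set where Lemma \ref{lemma} applies uniformly) merely spells out what the paper compresses into ``the maximal collapsing rate of non zero vectors in $T_xM$ is $\zeta_-(n,x)$''.
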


\begin{proof}

To begin with, assume  $u, v, w \in E_-(x)$,  then and denote $ z_n= W(D_xf^nu, D_xf^nv, D_xf^nw ) $. By the two previous paragraphs, 
\begin{eqnarray*} \label{all.growth}
\parallel z_n \parallel  & \leq C &   \parallel D_xf^nu \parallel  \parallel D_xf^nv \parallel
 \parallel D_xf^n w \parallel = \\
 \zeta_-(n, x)^3 \parallel u \parallel  \parallel v \parallel \parallel w \parallel   
& \approx & 
 {(\la_-^{d_-+2} \la_+^{d_+})^{3n/2}} C \parallel u \parallel \parallel v \parallel \parallel w \parallel   
  \end{eqnarray*}
That is, 
$$  \parallel z_n \parallel   \lessapprox   {(\la_-^{d_-+2} \la_+^{d_+})^{3n/2}} $$
(where $\lessapprox$ means that the ratio of the left hand by the right one is bounded independently   of $x$).

On the other hand,   by $f$-invariance of $W$,  
 $z_n = W(D_xf^nu, D_xf^nv, D_xf^nw) = D_xf^n z$, 
 for $z= W(u, v, w)$. 
 
 Decompose $z = z^- + z^+ + z^\la  \in E_-(x) \oplus E_+(x) \oplus E_\la(x)$, and  let 
 $z_n = z_n^- + z_n^+ + z_n^\la$
  accordingly, i.e. $z^-_n = D_xf^n z^-$... 
  
  Thus, the norm of each of these parts is dominated by
  $ {(\la_-^{d_-+2} \la_+^{d_+})^{3n/2}} $.
  
  However,  by 
  \S \ref{growth}
  $z^-_n \approx  {(\la_-^{d_-+2} \la_+^{d_+})^{n/2}} \parallel z^- \parallel$. 
  
  If $c$ denotes 
  $ {(\la_-^{d_-+2} \la_+^{d_+})}^{1/2}$, then recall  it  is $<1$. So, we have at the same time
  $z^-_n  \lessapprox c^{3n}$ and $z^-_n \approx c^n \parallel z^- \parallel$, which  obviously implies
  $z^- = 0$.
  
  Vanishing of $z^+$ and $z^\la$ is easier to check. Indeed, they behave like 
  $\zeta_+(n, x)  \parallel z^+ \parallel $ (and $ \zeta_\la(n, x) \parallel z^\la \parallel $ respectively). 
  By \S \ref{growth}, these are at least equivalent to $c^n \parallel z^+ \parallel $
  (and $c^n \parallel z^+ \parallel $ respectively). They can not be dominated by 
  $c^{3n}$, unless $z^+ = z^\la = 0$.
  
  \bigskip
  
  Finally, the case where  all  $u, v$ and $w$ belong to  $E_-(x) \oplus E_\lambda(x)$, or all belong
to $E_+x) \oplus E_\lambda(x)$. can be handled in the same way.
  \end{proof}

\subsubsection{Commutation}   \label{commutation} It was observed in particular in \cite{Kiosak} that any $L \in \L(M, g)$ commutes with the Ricci curvature $Ric$ (seen as an endomorphism of $TM$). From it we deduce that $Ric(u, v) = 0$ when $u$ and $v$ belong to two different 
eigenspaces of $L$ (two commuting symmetric endomorphisms of a Euclidean space have 
a common  eigen-decomposition, which is furthermore orthogonal). More precisely $Ric(u, v) = 0$ when $u \in E_\la(x)$ and $v \in E_\pm(x)$, or
$u \in E_-(x)$ and $v \in E_+(x)$.

\subsubsection{A second vanishing}

\begin{fact} \label{belonging1} Let $u \in E_-(x)$ and $v, w \in E_+(x) \oplus E_\la(x)$, 
then $W(u, v, w) \in E_- (x)$.

 .

\end{fact}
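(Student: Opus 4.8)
The plan is to prove the equivalent assertion that the $E_-$-complementary part of $z = W(u,v,w)$, i.e. its component in $E_+\oplus E_\lambda$, vanishes, by the same dynamical growth principle used for the first vanishing, now with mixed inputs. The two ingredients are the equivariance $D_xf^n\,z = W(D_xf^n u, D_xf^n v, D_xf^n w)$ and the uniform bound $\|W(a,b,c)\|\le C\|a\|\,\|b\|\,\|c\|$. Since $f$ preserves the orthogonal splitting $TM=E_-\oplus E_\lambda\oplus E_+$ and acts on each summand as a similarity with factors $\zeta_-,\zeta_\lambda,\zeta_+$ (see \S\ref{growth}), the left-hand side splits orthogonally, so the norm of the $E_+\oplus E_\lambda$-part of $D_xf^n z$ is exactly $(\zeta_+^2\|z_+\|^2+\zeta_\lambda^2\|z_\lambda\|^2)^{1/2}$, while the right-hand side is bounded by $C\,\zeta_-(n,x)\,\|D_xf^n v\|\,\|D_xf^n w\|$ because $u\in E_-$.

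First I would reduce the input cases by multilinearity in $v,w$: if $v,w\in E_\lambda$ then $u,v,w\in E_-\oplus E_\lambda$ and the first vanishing fact already gives $W(u,v,w)=0$, so I may assume at least one of $v,w$ has nonzero $E_+$-component. Then I run the estimate in the direction in which $E_+\oplus E_\lambda$ is the \emph{uniformly} most strongly contracted subspace, which is the backward map $f^{-1}$: its orbits approach $\mathcal S_+$, where $\lambda=\lambda_+$, so by the formulas of \S\ref{growth} and the ``$f^{-1}$'' data one gets $\zeta_+^2,\zeta_\lambda^2\to \lambda_-^{d_-}\lambda_+^{d_+}\lambda_+^2$ (\emph{equal}) and $\zeta_-^2\to \lambda_-^{d_-}\lambda_+^{d_+}\lambda_-\lambda_+$. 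Hence under $f^{-1}$ the factors are $\sigma_+=\sigma_\lambda$ with $\sigma_+^2=1/(\lambda_-^{d_-}\lambda_+^{d_+}\lambda_+^2)$ and $\sigma_-$ with $\sigma_-^2=1/(\lambda_-^{d_-}\lambda_+^{d_+}\lambda_-\lambda_+)$, where $\sigma_+<1$ by Corollary \ref{cor} and $\sigma_+<\sigma_-$.

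Feeding $u\in E_-$ (rate $\sigma_-$) and $v,w\in E_+\oplus E_\lambda$ (single rate $\sigma_+$) into the bound gives $\|(D f^{-1})^n z\|\le C\,\sigma_-^{n}\sigma_+^{2n}$, while the $E_+\oplus E_\lambda$-part of $(Df^{-1})^n z$ has norm exactly $\sigma_+^{n}(\|z_+\|^2+\|z_\lambda\|^2)^{1/2}$. Comparing, $\|z_+\|^2+\|z_\lambda\|^2\le C(\sigma_-\sigma_+)^{2n}$, so the component vanishes as soon as $\sigma_-\sigma_+<1$, equivalently $(\lambda_-^{d_-}\lambda_+^{d_+})^2\lambda_-\lambda_+^3>1$. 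By Corollary \ref{cor} one has $\lambda_-^{d_-}\lambda_+^{d_+}\lambda_+\ge1$, hence this inequality holds whenever $\lambda_-^{d_-}\lambda_+^{d_+}\lambda_-\lambda_+\ge1$, i.e. whenever $E_+$ does not contract under $f$; the convergence of the relevant products is supplied by Lemma \ref{lemma}.

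The \textbf{main obstacle} is that $\sigma_-\sigma_+<1$ can fail in the complementary regime $\lambda_-^{d_-}\lambda_+^{d_+}\lambda_-\lambda_+<1$, so $f^{-1}$ alone does not suffice; I resolve this by a dichotomy on whether $\zeta_+$ contracts under $f$. In that complementary regime all three forward factors $\zeta_-,\zeta_\lambda,\zeta_+$ contract, and the same comparison run \emph{forward} — now $\|D_xf^n z\|\le C\,\zeta_-\zeta_+^2$ against the forward rates $\zeta_+$ on $E_+$ and $\zeta_\lambda$ on $E_\lambda$ — kills the $E_+$-component (rate $\zeta_-\zeta_+\to0$) and the $E_\lambda$-component (rate $\zeta_+^2\to0$) since $\zeta_+$ contracts. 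Corollary \ref{cor} guarantees that exactly one of the two regimes occurs, so in either case the $E_+\oplus E_\lambda$-part of $W(u,v,w)$ is zero, i.e. $W(u,v,w)\in E_-$. I expect the delicate points to be the bookkeeping converting the eigenvalue products of \S\ref{growth} into the two-sided rate inequalities and the clean justification of the dichotomy from Corollary \ref{cor}; the commutation of $\mathcal L$-tensors with $Ric$ may be invoked to discard the Ricci terms in the Weyl formula and confirm that ``$\in E_-$'' is the correct target, but it is not otherwise needed for the estimate.
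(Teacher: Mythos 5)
Your proposal is correct in substance, but it takes a genuinely different route from the paper. The paper's own proof of this fact is a short, pointwise, purely algebraic computation with no dynamics at all: it pairs $W(u,v,w)$ against $z\in E_+(x)\oplus E_\lambda(x)$, uses the commutation of $\mathcal L$-tensors with $Ric$ to discard the Ricci corrections in the Weyl formula (so this commutation is \emph{essential} there, not the optional sanity check you describe), applies the pair symmetry of the Riemann tensor to trade $g_x(W(u,v,w),z)$ for $g_x(W(w,z,v),u)$, and concludes from the already-established vanishing of $W$ on $E_+\oplus E_\lambda$. You instead re-run the collapsing estimate of the first vanishing with mixed inputs, and your rate bookkeeping checks out: backward, $\sigma_+^2=\sigma_\lambda^2=1/(\lambda_-^{d_-}\lambda_+^{d_+}\lambda_+^2)<1$ always by Corollary \ref{cor}, and $\sigma_-\sigma_+<1$ iff $(\lambda_-^{d_-}\lambda_+^{d_+})^2\lambda_-\lambda_+^3>1$, which indeed follows from Corollary \ref{cor} when $\lambda_-^{d_-}\lambda_+^{d_+}\lambda_-\lambda_+\ge 1$; in the complementary regime all forward factors contract and your forward comparison kills $z_+$ (rate $\zeta_-\zeta_+$) and $z_\lambda$ (rate $\zeta_+^2$, using $\zeta_\lambda\asymp\zeta_-$). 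Two caveats: the dichotomy is exhaustive trivially (a number is either $\ge 1$ or $<1$) — Corollary \ref{cor} is needed only for $\sigma_+<1$ and the regime-A inequality, not for exhaustiveness; and the component norms are given by the asymptotic rates only up to constants uniform on compacta away from $\mathcal S_\pm$ (Lemma \ref{lemma}), so the vanishing is obtained at generic points and extended by continuity — the same implicit care the paper takes. What your approach buys: it uses only the equivariance and boundedness of $W$, never the explicit Weyl formula (which the paper states only for orthonormal quadruples), the curvature symmetries, or the $Ric$-commutation; moreover, in the backward regime your estimate also gives $\sigma_-^n\|z_-\|\le C\,\sigma_-^n\sigma_+^{2n}$, hence $z_-=0$, proving Fact \ref{vanishing2} simultaneously. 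What it costs is exactly the two-regime rate analysis that the paper's two-line algebraic argument avoids entirely.
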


\begin{proof} We have to prove that
 $g_x(W(u, v, w),   z) = 0$, whenever $z \in E_+(x) \oplus E_\la(x)$.  
 
 Recall the defining formula \ref{definition.Weyl} of the Weyl tensor.   Observe that $\delta^z_u = 0$ and $Ric(w, u)= 0$ (because of \S \ref{commutation}), and thus
   $g_x(W(u, v, w),   z) = g_x( R(u, v)w, z)$

   On the other hand
     $g_x(W(w, z, v ),   u) =  g_x( R(w, z)v, u)$  because $\delta^u_w = \delta^u_z = 0$.

     Hence 
     $g_x(W(u, v, w),   z) = g_x(W(w, z, v),    u) $. But, we already proved that $W(w, z, v) = 0$ since all $w, z, v$ belong to 
     $E_+(x) \oplus E_\la(x)$.

\end{proof}

\begin{fact} \label{vanishing2}

In the same conditions, that is, $u \in E_-(x)$ and $v, w \in E_+(x) \oplus E_\la(x)$, we have: $W(u, v, w) = 0$. 

\end{fact}

\begin{proof} We know by above that $z= W(u, v, w)$ belongs to $E_-$. 

By \S \ref{growth}, we can write: $D_xf^n z = \zeta_-(n, x) \overline{z_n}$, $D_xf^n u = \zeta_-(n, x) \overline{u_n}$, where $\overline{z_n}$ and $\overline{u_n}$ belong to $E_-(f^n(x))$, and 
$\parallel \overline{z_n} \parallel = \parallel z \parallel $ and 
$\parallel \overline{u_n } \parallel = \parallel u \parallel$. 

Thus $\overline{z_n} = W(\overline{u_n}, D_xf^n v, D_xf^n w)$. But $D_xf^nv$ and $D_xf^nw$ tend to 0 when
$n \to - \infty$,  hence $\parallel  z \parallel = \parallel z_n \parallel \to 0$, that is $z = 0$.
\end{proof}

\subsubsection{Full vanishing}

${}$

 Similar to the above situation  in  Fact \ref{belonging1}, we consider   the case where $u, v \in E_+(x)  \oplus E_\la(x)$ and $w \in E_-(x) $ and prove that $W(u, v, w) \in E_-(x)$. 
 For this goal,   observe  that  
   $Ric(w, u) = Ric(w, v) = 0$, and hence
     $g_x(W(u, v, w),   z) = g_x( R(u, v)w, z)$. 
     On the other hand, 
      $g_x(W(u, v, z),   w) = g_x( R(u, v)z, w)$, since $\delta^w_u = \delta^w_v = 0$.  Next, $u, v, z \in E_+(x)  \oplus E_\la(x)$ and thus $W(u, v, z) = 0$, and consequently 
     $g_x(W(u, v, w),   z) = 0$ 	as claimed.
     
     \medskip
   Next,  the proof of  Fact \ref{vanishing2} applies here  and yields  that $W(u, v, w) = 0$.  
 
 \medskip
   
   So, by this and the previous facts, 
   we get  that $W(u, v, w) =0$ whenever  (at least) two of them are in $E_+(x) \oplus E_\la(x)$
   (observe that $W(u, v, w)  = - W(v, u, w)$ ). 
     
Obviously, we can switch roles of $E_+$ and $E_-$, and so  get that $W(u, v, w) =0$ in all cases.
$\Box$

 \section{Riemannian case, Proof  Theorem \ref{Riemannian} }
 
  As previously, $(M, g)$ is a compact riemannian manifold with $\Pro(M, g) \supsetneq \Aff(M, g)$. 
  By \cite{Mat1}, the degree of  projective mobility $\dim \L(M, g)$ equals 2. 
 Pick $f \in \Pro(M, g)$ as in \S \ref{homography}.  
 
 If $\rho(f) $ is non-hyperbolic for any choice of such $f$, then, by \S \ref{non.hyperbolic}, 
  $\Pro(M, g)/ \Iso(M, g)$
  is finite as stated in  Theorem \ref{Riemannian}.
  
  If $\rho(f)$ is hyperbolic, then by \S \ref{hyperbolic}, the projective Weyl tensor of $(M, g)$ vanishes.

  \subsection*{End of proof in higher dimension}
  Vanishing of the Weyl tensor in dimension $\geq 3$  means that $(M, g)$ has constant sectional curvature
  (see for instance  \cite{Besse, Eastwood}). The universal cover of $M$ is necessarily the sphere since the Euclidean and hyperbolic  spaces have no projective non-affine transformations.

 \subsection*{Proof in dimension 2} The remaining part of the present section is devoted to the case
 $\dim M = 2$ and $\rho(f)$ hyperbolic.  Our goal in the sequel is to show  that in this case, too, $(M, g)$ 
 has constant curvature.
 
 The spectrum of $K_f$ consists of $\la$, and one constant, say $\la_-$.

 \subsubsection{Warped product structure}
 Let $\mathcal F_-$  and $\mathcal F_\la$
 the two one dimensional foliations tangent to the eigenspaces $E_-$ and $E_\la$. They are regular foliations
 on $M \setminus \SS_-$ (recall that  $\SS_- = \{x / \la(x) = \la_ -\}$).

 Recall that Dini normal form says that  two projectively equivalent metrics $g$  and $\bar{g}$ on a surface have the following form :
   $$g= (X(x) -Y(y) (dx^2 + dy^2), \; \; \bar{g} = (\frac{1}{Y(y)} - \frac{1}{X(x)})(\frac{dx^2}{X(x)} + \frac{dy^2}{Y(y)})$$
in some coordinates system ($x, y)$ and  near any point where the $(1, 1)$-tensor $L$ defined by $ \bar{g}(., .) = g(   \frac{L^{-1}}{\det L} ., .)$,  has simple eigenvalues. In fact, 
 $X(x)$ and $Y(y)$ are  the eigenvalues of $L(x, y)$, and the coordinates are adapted to eigenspaces. 
 
From this  normal form one deduces that $(\mathcal F_\la, \mathcal F_-)$ determines a warped product (for $g$ as well as $\bar{g}$), that is,  
there are coordinates $(r, \theta)$, where $\frac{1}{\partial r}$ (resp. $\frac{1}{\partial \theta}$) is tangent to 
$\mathcal F_\la$ (resp. $\mathcal F_-$), and the metric has the form $dr^2 + \delta(r) d \theta^2$ (see \cite{Zeghib1} for more information on warped products). Indeed here 
$Y(y) = \la_-$, and hence $g = (X(x) - \la_-) dx^2 + (X(x) - \la_-) dy^2$, and then change coordinates according to 
$dr^2 = (X(x) - \la_-) dx^2 $, $\theta = y$.
 
 We deduce in particular that the leaves of $\mathcal F_\la$ are geodesic in $(M, g)$. 
 
 \subsubsection{Topology} By \ref{growth}, $D_xf$ is contracting away from $\SS_+ = \{x / \la(x) = \la_+\}$. 
 
 Let 
 $c \in ] \la_-, \la_+[$  
  and  $M_c = \{x / \la(x) \leq c\}$.  It is a codimension 0 compact submanifold with boundary
 the level $\la^{-1}(c)$, 
  for $c$ generic. In particular, it  has  a finite number of connected components.  
 Since $\la$ is decreasing with $f $ (\S \ref{Dynamics}), $f$ preserves $M_c$:  $f(M_c) \subset M_c$. Taking a power of $f$, we can assume it preserves each component of $M_c$. On such a component, say $M_c^0$,  $f$ contracts the Riemannian metric, and hence also its generated distance. It follows that $f$ has   a unique fixed point $x_0 \in M_c^0$.  The $M_c^0$'s, for 
 $c $ decreasing to $\la_-$, is a decreasing family converging to $x_0$. It follows that 
 these $M_c$'s are topological discs, their boundaries $\la^{-1}(c)$ are circles surrounding $x_0$, and 
 finally $x_0$ is the unique point in $M_c^0$ with $\la(x) = \la_-$.

 It is a general fact on tensors of $\L(M, g)$, an eigenfunction is constant along   leaves tangent to the eigenspaces 
 associated to the other eigen-functions. In our case, 
 $\la$ is constant on the $\mathcal F_-$- leaves, equivalently, leaves of $\mathcal F_-$ are levels of 
 $\la$. So, these leaves are circles surrounding $x_0$.  Those of $\mathcal F_\la$ are orthogonal 
 to these circles, and hence they are nothing but the geodesics  emanating from $x_0$. Thus, 
 $\mathcal F_\la$ and $\mathcal F_-$ have $x_0$ as a unique singularity in 
 $M_c^0$.

  \subsubsection{Geometry} We infer from the above analysis that the polar 
     coordinates around $x_0$ gives rise to a warped product structure, that is, the metric on these coordinates 
 $(r, \theta)$, has the form: $dr^2 + \delta(r) d \theta^2$ (in the general case $\delta$  depends rather  on $(r, \theta)$). 
  
  At $x_0$, $D_{x_0}f$
 is   a similarity  with coefficient $\la_-$.

Observe next that, from the form of the metric, rotations $\theta \to \theta + \theta_0$ are isometries.  Composing 
$f$ with a suitable rotation, we can assume $f$ is a ``pure homothety'', i.e. it fixes individually each 
geodesic emanating form $x_0$.  Thus $f$ acts only at the $r$-level: $f(r, \theta) = f(r)$.

Now, the idea is to construct a higher dimensional example with the same ingredients, e.g. $\delta$ and $f$. Precisely, 
consider the metric $dr^2 + \delta (r) d \Omega^2$, where $d\Omega^2$ is the standard metric on a sphere 
$\mathbb S^N$.  We will show in the  Lemma   \ref{polar_coordinates} below that this metric is indeed smooth

We let $f$ act by $f(r, \Omega) = f(r)$. 

One verifies that $f$ is projective. Indeed, $\SO(N+1)$ acts isometrically and commutes with $f$, and any geodesic is
contained in a copy of our initial surface (since, as in the case of $\R^{N+1}$,  for any two points $x$ and $y$,  one may find a copy of the surface containing them).

This new $f$ has the same dynamical behaviour as the former one, and one proves as in \S \ref{hyperbolic}
that the projective Weyl tensor of this new metric vanishes and it has therefore a constant sectional curvature. 
The same is true for our initial surface.

$\Box$

 \begin{lemma} \label{polar_coordinates}  Consider  a metric $g$ of the form $dr^2 + \delta (r) d \Omega^2$, where 
 $\delta $ is defined on an interval $[0, R[$,  smooth on $]0, R[$,  and $d \Omega^2$ is  the metric of $\mathbb S^N$ (thus $g$ is defined 
 on   a ball $B(0, R) \setminus \{ 0 \}$ in $\R^{N+1} \setminus \{0\}$).  Then $g$ extends smoothly to $0$ if and only if 
 $\delta(r) = \zeta(r^2)$, where $\zeta $ is smooth as a function of   $r$  and $\zeta^\prime(0) = 1$.  In particular $g$ is smooth for some dimension $N >0$  iff it is smooth for any.

 \end{lemma}

\begin{proof}

Observe firstly that the condition on $\delta$ is equivalent to that the function 
 $\eta(r) = \frac{\delta(r) -r^2}{r^4}$  equals  $\kappa(r^2)$ where $\kappa$ is smooth on $r$.

Consider 
 the mapping  $\Omega: z = (z_1, \ldots, z_{N+1}) \in \R^{N+1} \setminus \{0\} \to  \Omega (z) = \frac{z}{\parallel z \parallel} = (\Omega_1, \ldots, \Omega_{N+1}) \in \mathbb S^N$. So $d \Omega=  (d \Omega_1, \ldots, d \Omega_{N+1}) $ is a vectorial 1-differential form on 
$\R^{N+1}\setminus \{0\}$ and 
 $d\Omega^2 = \Sigma d \Omega_i^2$ is a field of quadratic forms (on $\R^{N+1} \setminus \{0\}$) whose 
restriction to $\mathbb S^N$ coincide with the induced metric. 

Similarly, $r = \parallel z \parallel$, and hence  
$dr = \frac{1}{r}( \Sigma z_idz_i)$.  Thus $r^2dr^2 =  (\Sigma z_idz_i)^2$.

On the other hand,  it is  known that $g_E= r^2 + r^2 d \Omega^2$ is the   Euclidean metric $\Sigma dz_i^2$. Therefore $r^4 d\Omega^2$ is smooth and  equals exactly:
 $$r^4 d\Omega^2 =  r^2(g_E - dr^2)=  (\Sigma z_i^2)(\Sigma dz_i^2) - (\Sigma z_i dz_i)^2 =  \Sigma_{i \neq j} z_i^2 dz_j^2 - z_i z_j dz_i dz_j$$

 Now, let $g = dr^2 + \delta (r) d \Omega^2$. Then 
 $$g-g_E =  (dr^2 + \delta(r) d \Omega^2) - (dr^2 + r^2 d\Omega^2)= (\delta(r) -r^2) d \Omega^2 =
 \frac{(\delta(r) -r^2)}{r^4} (r^4 d \Omega^2 ) = \eta (r) (r^4 d \Omega^2)$$
 
 We deduce in particular that  a sufficient condition for $g$ to be smooth  (as a function of $z$) is that $\eta$ is smooth
 (as a function of $z$). 
 To see that this is also   a necessary condition, we infer from the previous  formula for $r^4 d \Omega^2$ that
 $\eta (r) r^4 d\Omega^2$ is smooth iff the functions  $\eta(r) z_iz_j$ are  smooth for any $i, j$. Then apply the next lemma:
 
 \begin{lemma} \label{smooth_functions} Let $\eta(r)$ a function such that all the functions  $\eta(r)z_i z_j$ are smooth on $z$. Then 
 $\eta(r)$ is smooth (as a function of $z$) and it equals  $\kappa(r^2)$ where $\kappa$ is smooth as a function of $r$.
 
 \end{lemma}
 
 \begin{proof}

 First, $\eta (r)z_i^2 \to 0 $ when $r \to  0$. Indeed if not, this limit does not depend on $i$, and one can take   the ratio 
 $ 1 = \lim \frac{\eta(r) z_i^2} {\eta(r) z_j^2} =  \lim \frac{z_i^2}{z_j^2}$, but the latter limit does not exist.

 For the next step, to simplify notations, let us assume the  dimension is 2 and note $x = z_1, y = z_2$
 (the proof in higher dimension is identical).
 
  By hypothesis $T(x, y)=  \eta(r) (x^2 + y^2) = \eta (r) r^2$ is smooth. 
Its  Taylor expansion  allows one to write it, up to any order,  as a sum of homogeneous polynomials on $x$ and $y$.
 Since
 $T$ is $\SO(2)$-invariant, the same is true for these polynomials. 
 Now, let $P$ such a polynomial of degree $k$.  By homogeneity and 
 $ \SO(2)$-invariance $\frac{P} {(x^2 + y^2)^{\frac{k}{2}}}$ is constant on $\mathbb S^1$
 and hence constant, that is $P$ is proportional to $r^{\frac{k}{2}}$. This implies in particular that $k$ is even.  Therefore, the Taylor expansion is on the powers $r^2, r^4, r^6, \ldots$.  We finally get that $\eta(r)$ has a Taylor expansion on $1, r^2, r^4, \ldots$..., which ensures the existence of $\kappa$. 
 \end{proof}

 \bigskip

 Coming back to the proof of Lemma  \ref{polar_coordinates}, 
assume now that 
   $g = dr^2 + \delta (r) d \Omega^2$ is a  smooth Riemannian metric on a neighbourhood of  0.  For fixed $\Omega$, the ray $r \to (r, \Omega)$
is an arc-length parameterized geodesic. It follows that $(r, \Omega) \to z = r\Omega$ are normal coordinates, i.e. the inverse
of the exponential map at 0. Thus,  as in  the Euclidean  case,  the metric $g$ is smooth with respect to $z$. 
Therefore, $\delta(r) $ satisfies the same conditions as above. 
\end{proof}

\begin{remark} As an alternative for all this proof in dimension 2, the referee suggests to mimic the higher dimensional proof
by replacing the Weyl tensor by  its 2-dimensional version, the Liouville tensor (as used in  \cite{{Bryant}}).

\end{remark}

 \section{The K\"ahler case: Proof of Theorem \ref{K\"ahler}}
 \label{K\"ahler.case}

 Let $F(N, b)$  denote the simply connected Hermitian space of dimension $N$ and constant holomorphic sectional curvature 
 $b$.  Calabi proved (in his thesis) the following striking fact:
 
 \begin{theorem}[Calabi \cite{Calabi}] Let $M$ be a K\"ahler manifold (not necessarily complete) and $f: M \to F(N, b)$ 
 a holomorphic isometric immersion.
  Then, $f$ is rigid in the sense that any other  immersion $f^\prime$ is deduced from $f$ by 
 composing with an element of $\Iso(F(N, b))$  
 (this element is unique if the image of $f$  in not contained in a totally geodesic proper  subspace  
 of $F(N, b)$). In particular, $f$ is equivariant with respect to  some faithful representation 
  $\Iso(M) \to \Iso(F(N, b))$.
 
  \end{theorem}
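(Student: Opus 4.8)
The plan is to prove this by Calabi's method of the \emph{diastasis function}, the canonical Kähler potential which turns rigidity into an essential-uniqueness statement for Hermitian sums of holomorphic functions. First I would set up the diastasis. Because $f$ is holomorphic and the model metric on $F(N,b)$ is real-analytic, the induced metric $f^{*}g$ on $M$ is a real-analytic Kähler metric; hence near each point it admits a real-analytic local Kähler potential $\Phi(z,\bar z)$, which can be analytically continued to a function $\Phi(z,\bar w)$ holomorphic in $z$ and antiholomorphic in $w$. The diastasis is
\[
\mathcal{D}(p,q)=\Phi(z,\bar z)+\Phi(w,\bar w)-\Phi(z,\bar w)-\Phi(w,\bar z),
\]
where $z,w$ are the coordinates of $p,q$. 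Two properties must be recorded. (i) $\mathcal{D}$ is \emph{intrinsic}: any two potentials differ by a pluriharmonic term $h(z)+\overline{h(z)}$, which cancels in the alternating combination. (ii) $\mathcal{D}$ is \emph{hereditary}: if $\Phi_{F}$ is a potential for the model then $\Phi_{F}\circ f$ is a potential for $f^{*}g$, so $\mathcal{D}_{M}(p,q)=\mathcal{D}_{F}(f(p),f(q))$.

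Next I would compute the diastasis of the model and read off its ``resolution''. After composing with an ambient isometry so that a chosen base point $p_{0}$ lands at the origin of an affine chart, a direct computation gives (for $b>0$, after normalizing the constant) $e^{\,b\,\mathcal{D}_{F}(\,\cdot\,,\,o)}=1+\|z\|^{2}$, whose polarization is the \emph{finite} Hermitian sum $\sum_{j=0}^{N}\hat z_{j}\,\overline{\hat w_{j}}$ with $\hat z=(1,z_{1},\dots,z_{N})$ the homogeneous coordinates; the cases $b<0$ (giving $1-\|z\|^{2}$, of signature $(1,N)$) and $b=0$ (where one uses $\mathcal{D}$ itself, with polarization $\langle z,w\rangle$) are formally identical. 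Combining this with heredity, the polarization $\Theta(p,\bar q)$ of $e^{\,b\,\mathcal{D}_{M}(p,\,\cdot\,)}$ is on one hand \emph{intrinsic} to $(M,f^{*}g)$, and on the other hand equals the explicit finite Hermitian sum $\sum_{j}\hat f_{j}(p)\,\overline{\hat f_{j}(q)}$ formed from the components $\hat f_{j}$ of the immersion.

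The rigidity is then the uniqueness of such expansions. I would use the linear-algebra lemma: if $\sum_{j}\phi_{j}(p)\overline{\phi_{j}(q)}=\sum_{k}\chi_{k}(p)\overline{\chi_{k}(q)}$ as an identity between two minimal finite families of holomorphic functions, then the two families span the same space and are intertwined by a matrix $U$ preserving the relevant Hermitian form (definite for $b>0$, of signature $(1,N)$ for $b<0$), i.e.\ $\chi=U\phi$. Given a second immersion $f'$ inducing the same metric, $\Theta$ is the same for $f$ and $f'$, so their homogeneous lifts satisfy $\hat f'=U\hat f$ with $U\in\U(1+N)$ (resp.\ $\U(1,N)$); such a $U$ is precisely the lift of an element $A\in\Iso(F(N,b))$, whence $f'=A\circ f$. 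If the image is non-degenerate the $\hat f_{j}$ span, which forces $U$, hence $A$, to be unique. Finally, for $\gamma\in\Iso(M)$ the immersion $f\circ\gamma$ induces the same metric as $f$, so rigidity furnishes a unique $\sigma(\gamma)\in\Iso(F(N,b))$ with $f\circ\gamma=\sigma(\gamma)\circ f$; uniqueness makes $\gamma\mapsto\sigma(\gamma)$ a homomorphism, and it is faithful (in the embedded setting of Theorem \ref{K\"ahler}, $f\circ\gamma=f$ at once gives $\gamma=\mathrm{id}$).

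I expect the main obstacle to lie in the foundational layer rather than the endgame: one must justify that the diastasis is genuinely well defined and intrinsic (the pluriharmonic ambiguity of potentials) and that $f^{*}g$ is truly real-analytic, so that $\mathcal{D}_{M}(p,\,\cdot\,)$ admits the holomorphic polarization $\Theta(p,\bar q)$ --- without this the ``finite Hermitian sum'' is meaningless. A secondary point requiring care is the uniqueness lemma in the indefinite case $b<0$, where $U$ must be sought in the pseudo-unitary group $\U(1,N)$, together with the bookkeeping of the base-point normalization, which silently absorbs the translational (parabolic) part of $\Iso(F(N,b))$.
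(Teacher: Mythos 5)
The paper gives no proof of this statement at all --- it is quoted as Calabi's theorem, with the proof residing in \cite{Calabi} --- and your sketch is precisely Calabi's original diastasis argument (intrinsicness and heredity of the diastasis, polarization of $e^{b\mathcal{D}}$ into a finite Hermitian sum, uniqueness of such resolutions up to $\U(1+N)$, resp.\ $\U(1,N)$), which is correct in outline and matches the cited source. The one caveat to add to your own list: the diastasis and the homogeneous lifts $\hat f$ are only locally defined, so after producing $U$ with $\hat f'=U\hat f$ near a base point you must still propagate the identity $f'=A\circ f$ over all of the connected manifold $M$ by real-analytic continuation, and likewise note that faithfulness of $\gamma\mapsto\sigma(\gamma)$ for a mere immersion (rather than an embedding) needs the local injectivity of $f$ plus a connectedness argument, not just $f\circ\gamma=f$.
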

 
 As for holomorphic isometric immersions between space forms, one deduces  (for more information, see   for instance in \cite{Takeuchi, Hul, Discala}):
 
\begin{theorem} --  The  K\"ahler Euclidean  space $\C^d$ can not embed holomorphically isometrically in a projective space $\mathbb P^N(\C)$ (a radially simple example of this is the situation of a holomorphic vector field; it  does not act isometrically, in particular its orbits are not 
metrically homogeneous).

--   Up to ambient isometry,  the
holomorphic homothetic  embeddings between projective spaces are given by Veronese maps: 
$v_k: (\mathbb P^d(\C), g_{FS}) \to  (\mathbb P^N(\C), \frac{1}{k} g_{FS})$, $N = \binom{d+k}{k}-1$, $v_k: [X_0, \ldots, X_d]  \to [ \ldots X^I \ldots ]$, where $X^I$ ranges  over all monomials of degree $k$ in $X_0, \ldots, X_d$. 

\end{theorem}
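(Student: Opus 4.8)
The plan is to deduce the statement from Calabi's rigidity theorem together with two explicit computations of the diastasis of a complex space form $F(N,b)$. Recall that for a real-analytic K\"ahler metric Calabi's diastasis $D(p,q)$ is a canonical potential enjoying the \emph{hereditary property}: if $f\colon (M,g)\to (N,h)$ is a holomorphic isometric immersion then $D_M=f^*D_N$, i.e. $D_M(p,q)=D_N(f(p),f(q))$. In a homogeneous description this says that the pullback of the target K\"ahler potential agrees with the source potential up to a pluriharmonic term; once this ambiguity is absorbed, it becomes a clean algebraic identity between Hermitian norm functions. The whole argument then rests on the observation, already contained in the quoted rigidity theorem, that a holomorphic map $z\mapsto(f_0(z),\dots,f_N(z))$ into $\C^{N+1}$ is determined, up to a unitary of $\C^{N+1}$, by its norm function $\sum_i|f_i|^2$.

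For the first assertion, suppose $\C^d$ admitted a holomorphic isometric immersion (in particular an embedding) into $\mathbb P^N(\C)$. Writing the Fubini--Study potential as $\log\sum_j|W_j|^2$ and the flat potential as $|z|^2$, the hereditary property forces, after absorbing a nowhere-vanishing holomorphic factor $e^{h/2}$, an identity $\sum_{i=0}^N|g_i(z)|^2=e^{|z|^2}$ with the $g_i$ holomorphic. But the Bargmann--Fock kernel expands as $e^{|z|^2}=\sum_\alpha \frac{|z^\alpha|^2}{\alpha!}$, an infinite sum of squared moduli of linearly independent monomials with strictly positive coefficients; equivalently its coefficient matrix is positive definite of infinite rank. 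A finite sum $\sum_{i=0}^N|g_i|^2$ has coefficient matrix of rank $\le N+1$, so it can never reproduce such a kernel, and this contradiction rules out every finite $N$.

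For the second assertion, let $f\colon\mathbb P^d(\C)\to\mathbb P^N(\C)$ be a holomorphic homothety expanding the metric by the factor $k$. Any holomorphic map between projective spaces is given in homogeneous coordinates by polynomials $P_0,\dots,P_N$ of a common degree $m$, and the homothety factor pins down $m=k$, since the pullback of the hyperplane class is multiplied by the same factor. The potential condition then reads
\[
\sum_{j=0}^N |P_j(Z)|^2 \;=\; \Bigl(\textstyle\sum_{i=0}^d |Z_i|^2\Bigr)^{k} \;=\; \sum_{|\alpha|=k}\binom{k}{\alpha}\,|Z^\alpha|^2,
\]
the last step being the multinomial theorem. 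The degree-$k$ monomials $Z^\alpha$ are linearly independent, and there are exactly $\binom{d+k}{k}$ of them, so the right-hand side is a positive-definite Hermitian form of rank $\binom{d+k}{k}$; hence $N+1\ge\binom{d+k}{k}$, with the span of the $P_j$ equal to the whole space of degree-$k$ monomials for a full (linearly non-degenerate) map. Comparing $(P_j)_j$ with the Veronese vector $\bigl(\sqrt{\binom{k}{\alpha}}\,Z^\alpha\bigr)_\alpha$, which realizes the same norm function, the uniqueness-up-to-unitary statement produces $g\in\SU(N+1)$ with $f=g\circ v_k$; when $N+1>\binom{d+k}{k}$ the image simply sits inside the corresponding linear $\mathbb P^{\binom{d+k}{k}-1}$.

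The main obstacle, and the only genuinely analytic point, is establishing that the homothetic (resp. isometric) condition really collapses to these clean norm identities: one must check that the pullback of the Fubini--Study potential differs from the source potential by a globally controlled pluriharmonic term that can be absorbed into the homogeneous representation, and, in the flat case, that the factor $e^{|z|^2}$ genuinely forces \emph{infinite} rank rather than merely a large finite one. Once the problem is reduced to matching reproducing kernels, the remainder is linear algebra — linear independence and counting of the degree-$k$ monomials — together with the rigidity lemma, which we invoke from Calabi's theorem.
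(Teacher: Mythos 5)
Your proposal is correct, and it is worth noting that the paper itself offers no proof of this theorem: it presents it as a known consequence of Calabi's rigidity theorem and simply points to the literature (Calabi, Takeuchi, Hulin, Di Scala). What you have written is, in substance, the classical argument from those references --- reduction to an identity between norm functions via the hereditary property of the diastasis, the infinite rank of the Bargmann--Fock kernel $e^{|z|^2}=\sum_\alpha |z^\alpha|^2/\alpha!$ against the rank $\leq N+1$ of $\sum_j |g_j|^2$ for the flat case, and the multinomial expansion of $\|Z\|^{2k}$ plus uniqueness-up-to-unitary for the Veronese case --- so you are filling in exactly the details the paper outsources. Two refinements. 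First, the step you flag as the ``only genuinely analytic point'' is a one-liner in the compact case: since the linear system $(P_0,\ldots,P_N)$ is base-point free, $\log\bigl(\sum_j|P_j(Z)|^2/\|Z\|^{2k}\bigr)$ is homogeneous of degree zero, hence a well-defined smooth pluriharmonic function on all of $\mathbb P^d(\C)$, and by the maximum principle it is constant; after rescaling the $P_j$ the norm identity holds exactly. In the flat case no global control is needed at all: the identity $\sum_j|g_j|^2=e^{|z|^2}$ on any small ball already yields the contradiction, because the Hermitian coefficients of a real-analytic function of $(z,\bar z)$ are determined by polarization at the center. Second, two small slips: uniqueness of the lift gives $g\in\U(N+1)$ rather than $\SU(N+1)$ (immaterial on $\mathbb P^N(\C)$ after multiplying by a phase), and an embedding is a special case of an immersion, not conversely, so your parenthetical in the first part is backwards (harmlessly, since you prove the stronger nonexistence of immersions). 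Your derivation of the integrality $m=k$ from the pullback of the hyperplane class is a pleasant topological shortcut compared with extracting the quantization of the scaling factor from the diastasis expansion, as Calabi originally does.
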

 
 \bigskip

 \subsubsection*{Proof of Theorem \ref{K\"ahler}} This will follow from 
our 
rigidity theorem of the $h$-projective group of 
 K\"ahler manifolds (see \S \ref{Kaehler}),  together with the following fact.
 
 \begin{fact} Let $(M^d, {g_{SF}}_{\vert M}) $ be a submanifold of $(\mathbb P^N(\C), g_{SF})$,  then
 $\Aff(M^d, {g_{SF}}_{\vert M})/ \Iso (M^d, {g_{SF}}_{\vert M}) $ is finite (vaguely bounded by $ \frac{n}{2} !$).
 
 \end{fact}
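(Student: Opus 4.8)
The plan is to reduce everything to the de~Rham decomposition of the universal cover and to use Calabi's rigidity to kill all flat directions. Since $M$ is a closed complex submanifold of $\mathbb P^N(\C)$ it is compact, so its universal cover $(\tilde M, \tilde g)$ is a complete simply connected K\"ahler manifold and the de~Rham theorem provides a splitting $\tilde M = \R^{2m} \times M_1 \times \cdots \times M_r$ into a flat K\"ahler factor and irreducible non-flat K\"ahler factors $(M_i, g_i)$, with $\sum_i \dim_\C M_i = d-m$ and hence $r \leq d$. The two facts I would extract from this decomposition are that each irreducible factor admits no extra affine symmetry, and that affine transformations of the product can only permute homothetic factors; finiteness of $\Aff/\Iso$ is then forced by the finiteness of the set of factors.

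First I would show that there is no flat factor, i.e. $m=0$. If $m\geq 1$, a leaf of the flat de~Rham factor is a totally geodesic \emph{complex} submanifold of $\tilde M$ whose induced metric is flat (the factor foliations of a K\"ahler de~Rham splitting are holomorphic and their leaves totally geodesic). Projecting it to $M$ and composing with the holomorphic isometric embedding $M \hookrightarrow \mathbb P^N(\C)$ yields a holomorphic isometric immersion of an open piece of $\C^m$ into $\mathbb P^N(\C)$, contradicting Calabi's obstruction \cite{Calabi} recalled above. Hence $\tilde M = M_1 \times \cdots \times M_r$ with all factors irreducible and non-flat. Next I would record that $\Aff(M_i,g_i)=\Iso(M_i,g_i)$: an affine self-map $\psi$ preserves the Levi-Civita connection, so $\psi^* g_i$ has the same connection as $g_i$, and by irreducibility the only parallel $g_i$-symmetric endomorphisms are scalar, so $\psi^*g_i = c\,g_i$ is a homothety. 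A homothety with $c\neq 1$ of a complete manifold forces flatness (at its unique fixed point the curvature is fixed by $\psi_*$ yet scaled by $c^{-2}$, so $R=0$ there, and then $R\equiv 0$ by pushing every point to the fixed point under iteration), contradicting non-flatness; thus $c=1$.

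Finally I would assemble the pieces using the classical description of the affine group of a complete simply connected product (de~Rham for isometries, Kobayashi--Nomizu for affine maps): $\Aff(\tilde M) = \big(\prod_i \Aff(M_i)\big)\rtimes \Sigma_{\mathrm{hom}}$ and $\Iso(\tilde M) = \big(\prod_i \Iso(M_i)\big)\rtimes \Sigma_{\mathrm{isom}}$, where $\Sigma_{\mathrm{hom}}$ (resp. $\Sigma_{\mathrm{isom}}$) permutes the factors that are mutually homothetic (resp. isometric) — two factors being swappable by an affine map precisely when they are homothetic, by the homothety computation above. Since $\Aff(M_i)=\Iso(M_i)$, the index $[\Aff(\tilde M):\Iso(\tilde M)] = [\Sigma_{\mathrm{hom}}:\Sigma_{\mathrm{isom}}]$ is finite and bounded by $r!\leq d! = \frac{n}{2}!$. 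Writing $\Gamma = \pi_1(M)\subset \Iso(\tilde M)$, one has $\Aff(M) = N_{\Aff(\tilde M)}(\Gamma)/\Gamma$ and $\Iso(M) = \big(N_{\Aff(\tilde M)}(\Gamma)\cap \Iso(\tilde M)\big)/\Gamma$, whence $[\Aff(M):\Iso(M)]\leq [\Aff(\tilde M):\Iso(\tilde M)]\leq \frac{n}{2}!$, as claimed.

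The main obstacle is the very first reduction: one must be certain that a flat de~Rham factor genuinely produces a local holomorphic isometric copy of $\C^m$ inside $\mathbb P^N(\C)$, which requires knowing that the K\"ahler de~Rham factor foliation is holomorphic with totally geodesic leaves, so that Calabi's rigidity applies verbatim. Once the flat factor is excluded, the remaining work is bookkeeping: the affine group acts on the finitely many irreducible factors through a permutation group, and the discrepancy between homothety and isometry classes is exactly the finite quotient $\Aff/\Iso$, with the crude bound $\frac{n}{2}!$ coming from the number of factors $r\leq d$.
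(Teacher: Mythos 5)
Your proposal is correct and follows essentially the same route as the paper: exclude the flat de~Rham factor via Calabi's rigidity, note that an affine self-map of a complete irreducible non-flat factor is a homothety and kill the case $c\neq 1$ by the fixed-point/curvature-scaling argument, and obtain finiteness of $\Aff/\Iso$ from permutations of the finitely many factors, with the same bound $r!\leq d!=\frac{n}{2}!$. You merely make explicit some bookkeeping the paper leaves implicit (the paper preserves the splitting by commutation with the holonomy group and passes to a power of $\tilde f$, whereas you invoke the Kobayashi--Nomizu product structure of $\Aff(\tilde M)$ and descend to $M$ via the normalizer of $\pi_1(M)$), which is a harmless and sound elaboration.
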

 
 \begin{proof} This is a standard idea (see for instance \cite{Koba, Zeghib}), the unique special fact we use here is that,   by Calabi Theorem, the universal cover has no flat factor in its De Rham decomposition. 
 Thus $\tilde{M}$ is a product $ \tilde{M_1} \times \ldots \times \tilde{M_m}$ of irreducible K\"ahler  manifolds. 
 
 The holonomy group
 $Hol^{\tilde{M} }$ equals the product $Hol^{\tilde{M_1} } \times   \ldots \times Hol^{\tilde{M_m} }$. An affine transformation $\tilde{f} $ commutes with $Hol$ and hence preserves the De Rham splitting. 
 Taking a power, we can assume that $\tilde{f}$ actually  preserves each factor, and we will  thus prove that 
 $\tilde{f} $ is isometric.  Since $\tilde{M_i}$ is irreducible, $\tilde{f}$ induces a homothety 
 on it, say of distortion $c$. If $c \neq 1$, then $f$ or $f^{-1}$ is contracting  with respect to the distance 
 of $\tilde{M_i}$. 
 In this case, $\tilde{f}$ will have a (unique)  fixed point in $\tilde{M_i}$. However, $\tilde{f}$ preserves the
 Riemann curvature tensor $R(X, Y)Z$ of 
 $\tilde{M_i}$.  But being invariant by a contraction (or a dilation), this tensor must vanish, that is $\tilde{M_i}$ is flat, contracting the fact that it is irreducible.  Therefore, $c = 1$, that is $\tilde{f}$ is isometric.
 
  \end{proof}
 
 \begin{remarks} ${}$
 
 1. By equivariance,  Segre maps $\mathbb P^m (\C)\times \mathbb P^n (\C)  \to \mathbb P^{(m+1)(n+1)-1} (\C)$ are homothetic. 
 In particular, some $(\mathbb P^1 (\C) \times \mathbb P^1 (\C), \frac{1}{k} (g_{FS} \oplus g_{FS}))$ can be embedded in some
 $\mathbb (P^N(\C), g_{FS})$. By composing Veronese and Segre maps, one can also realize some 
 metrics $(\mathbb P^1 (\C),  \frac{1}{k} g_{FS}) \times  (\mathbb P^1 (\C),  g_{FS}))$.
 
2.  In fact, it turns out that for $M$ a submanifold of $\mathbb P^N(\C)$, De Rham decomposition applies to 
$M$ itself; that is the splitting of $\tilde{M}$ descends to a one of $M$. I am indebted to A. J.  Di Scala for giving me
 a proof of that  using Calabi rigidity.  Indeed, this rigidity has the following amuzing corollary:   if $M$ is holomorphically isometrically embedded  in 
$\mathbb P^N(\C)$, then neither a cover nor a quotient of it can be embedded  so. Now, the De Rham splitting of $\tilde{M}$ gives 
an immersion into products of projective spaces. Segre map is isometric form this product to one big projective space, which gives us another 
holomorphic isometric immersion of $\tilde{M}$. But this must coincide with the immersion given by the universal cover $\tilde{M} \to M$. This 
implies that the  De Rham decomposition is defined on $M$ itself.

  \end{remarks}

 \section{Facts on the  indefinite pseudo-Riemannian case: Proof of Theorem \ref{pseudo}}
 
Let $(M, g)$ be a compact pseudo-Riemannian manifold with projective degree 
of mobility $\dim \L(M, g) = 2$, such that $\Pro(M, g)/\Aff(M, g)$ is infinite. 
Consider  $\rho: \Pro(M, g) \to \GL_2(\R)$. 

Denote  $G= \rho(\Pro(M, g))$. 
Theorem
\ref{pseudo} says that, up to  finite index, $\ker \rho =  \Iso(M, g) = \Aff(M, g)$,  and $G$ 
 lies in a non-elliptic one parameter group.

   \subsection{``Projective linear'' action of $\Pro(M, g)$}

 So far, we singled out  an element $f \in \Pro(M, g)$ and associate to it a homography $A\centerdot$ acting on $\bar{\R}$. It turns out 
 that this $A\centerdot$ is nothing but   that corresponding 
  to the (projective)  action of $\rho(f) $  on the projective space of $\L (M, g)$, identified to $\mathbb P^1(\R)$, via the basis $\{K= K_f, I\}$.  

Indeed, the choose of the basis $\{K, I\}$, say co-ordinates $(k, i)$,  
  allows one to identify  $\mathbb  P (\L(M, g))$ with $  \mathbb P^1 (\R)$. In the affine chart $[k: i] \in   \mathbb P^1(\R) \to z = \frac{k}{i}$, 
 the projective action of  $\rho (f) $  
 is $z \to  \frac{\alpha z + \beta}{z}$, where  $\alpha$ and $\beta$ are  defined by $\rho(f) K = \alpha K + \beta I$, as in 
  \S \ref{homography}.

   Now, we let the whole group $\Pro(M, g)$ act by means of $\rho$ on the projective space, and in fact the complex one. 
More precisely, let  $$\Phi: \Pro(M, g) \to \PGL(\L(M, g) \otimes \C)$$ be the action associated to $\rho$ on $\mathbb P (\L(M, g) \otimes \C)$, the projective space of the complexification  
of $\L(M, g)$. 

The degeneracy set $\mathcal D$ is complexified as $$\mathcal D^\C =  \{L \in \mathbb P^1(\L(M, g) \otimes \C), L \; \hbox{not an isomorphism of }\;  TM \otimes \C \}$$
   
   The proof of the following fact is similar to that of Fact \ref{sectors}.
 
  \begin{fact}   Let $f $ be any element of $ \Pro(M, g)$ with $K_f \neq - \pm I$, then $\mathcal D^\C$ can 
  be computed by means of $K_f$ as follows.   Under the identification 
of $\mathbb P (\L(M, g) \otimes \C)$ with $\mathbb P^1(\C)$ via the basis $\{K_f, I\}$, the set $\mathcal D^
\C$ corresponds  to the range of the spectrum mapping of $K_f$:  $x \in M \to Sp^{K_f}(x) =$ Spectre of $K_f(x) \subset \C$.

  \end{fact}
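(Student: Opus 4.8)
The plan is to mimic the proof of Fact~\ref{sectors}, only now keeping track of the full complex spectrum rather than the real spectral values. First I would record that both $I$ and $K_f$ are isomorphisms of $TM$, and hence of $TM\otimes\C$, at every point of $M$: for $I$ this is trivial, while $K_f$ is fiberwise invertible because it is the $L$-representation tensor of the genuine metric $f_*g$, defined by $f_*g(\cdot,\cdot)=\frac{1}{\det K_f}\,g(K_f^{-1}\cdot,\cdot)$. Consequently $[I]$ and $[K_f]$ do not belong to $\mathcal D^\C$, and since by hypothesis $K_f$ is not proportional to $I$, the pair $\{K_f,I\}$ is a genuine basis of $\L(M,g)$, so that it identifies $\mathbb P(\L(M,g)\otimes\C)$ with $\mathbb P^1(\C)$.

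Next I would parametrize this projective line by $t\mapsto [K_f-tI]$, $t\in\C\cup\{\infty\}$, with $t=\infty$ yielding $[I]$ and $t=0$ yielding $[K_f]$. The degeneracy criterion is then immediate: a representative $L=aK_f+bI$ fails to be an isomorphism of $TM\otimes\C$ exactly when $\det_\C\big(aK_f(x)+bI\big)=0$ for some $x\in M$; writing $t=-b/a$ this reads $\det\big(K_f(x)-tI\big)=0$, i.e.\ $t$ is an eigenvalue of $K_f(x)$, that is $t\in Sp^{K_f}(x)$. Running over all $x$, I conclude that in this parametrization $\mathcal D^\C=\bigcup_{x\in M}Sp^{K_f}(x)$, the range of the spectrum map. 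The two distinguished classes $[I]$ (at $t=\infty$) and $[K_f]$ (at $t=0$) lie outside $\mathcal D^\C$, consistently with $\infty$ and $0$ never appearing in the spectrum, the latter precisely because $K_f$ is invertible.

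The only point requiring care — and it is the sole novelty over Fact~\ref{sectors} — is that over $\C$ the characteristic polynomial $t\mapsto\det(K_f(x)-tI)$ splits completely, so its roots now capture the \emph{entire} complex spectrum of $K_f(x)$, including the non-real eigenvalues that are invisible to the real analysis of \S\ref{homography}. Since the fiberwise determinant condition is insensitive to the ground field, no new computation is needed: the same determinant that produced the real cones $C_i$ of Fact~\ref{sectors} produces here the full complex range $\bigcup_{x\in M} Sp^{K_f}(x)$. I therefore expect no genuine obstacle beyond this complexified bookkeeping of the real argument.
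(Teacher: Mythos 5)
Your proof is correct and follows essentially the same route as the paper, which gives no separate argument but simply remarks that the proof is ``similar to that of Fact \ref{sectors}'': one notes $[I]$ and $[K_f]$ are fiberwise invertible hence outside $\mathcal D^\C$, parametrizes the remaining classes as $[K_f-tI]$, and observes that degeneracy at some $x$ means exactly $\det(K_f(x)-tI)=0$, i.e.\ $t\in Sp^{K_f}(x)$. Your added bookkeeping (invertibility of $K_f$ via $f_*g$, the $a=0$ case at $t=\infty$, and the scale factor $a^d$ in the determinant) is precisely the complexified version of that argument, so there is nothing to correct.
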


  The point is that this set is invariant under the $G$-action.

  \subsubsection{} By Fact  \ref{injective}, the projection of $G$  on its image in $\PGL_2(\R)$ has finite index. In fact, since we are interested 
 in  objects up to finite index, for simplicity seek, we will argue as if $G$ is contained in 
   $   \PGL_2(\R)$, in fact in $\SL_2(\R)$ to be more concrete.

\subsubsection{The Kernel of $ \rho$} Let $h \in \Aff(M, g)$,   we will prove that $\rho(h) = 1$, up to index 2. Since $h$
is affine,  all $K_h$-eigenvalues are constant.  It follows that $K_h$ has the form $a I$, since otherwise it generates together with $I$ the whole $\L(M, g)$, and hence all the $K_f$ will have constant eigenvalues for any $f$,  contradicting the fact 
that 
$\Pro(M, g) \supsetneq \Aff(M, g)$. By finiteness of the volume, $a = \pm 1$, say $a = 1$, i.e. $K_h = I$. 
Now, 
$\rho(h) L = h_* L K_h = h_*L$,   
and thus $\rho(h)I= I$.  
 Therefore, if $\rho(h) \neq 1$, $\rho(h)$ will be parabolic with unique fixed point $I$ (in $\mathbb P^1(\C)$).
 So, any closed $\rho(h)$- invariant set contains $I$. But this is not the case of the degeneracy set 
  $\mathcal D^\C$ (since it corresponds to the spectrum).

\subsection{Proof that $G$ is contained in a one parameter group} As suggested by the referee, we will make use 
of Theorem 1.11 of \cite{Bol-Mat_2}. It states that if for some $x$, $K_f(x) $ has a complex eigenvalue $\lambda$,  then this is a constant eigenvalue, that is, $\lambda$ is eigenvalue of  $K_f(y) $ for any $y \in M$.  So the proof will be essentially similar to the Riemannian case.  More precisely, let $f \in \Pro(M, g)$ such that $\rho(f)$ is hyperbolic or parabolic,  then $K_f$ has everywhere a real spectrum.  Indeed, otherwise, the homography associated to $f$ will have a non-real 
fixed point in $\mathbb P^1(\C)$ which is impossible (since this homography is real). 

Furthermore,  the range of the spectrum of  $K_f$ is  a compact interval in $\R$ (non-reduced to a point since $f$
is not affine). Now,  a parabolic homography preserves no non trivial compact interval, and so this case is impossible.
In the hyperbolic case, the unique  non-trivial invariant interval is that joining the two fixed points.  It follows that 
$\mathcal D^\C$ is an interval in $\mathbb P^1(\R) \subset \mathbb P^1(\C)$. Therefore, the group $G$ preserves a subset of two points consisting in the extremities of this interval. But the subgroup of $\SL_2(\R)$
preserving two points in $\mathbb P^1(\R)$ has a one parameter subgroup as a normal subgroup of index two (e.g.  in 
the case of $\{0, \infty\}$, this group is generated by  of $z \to a z$, $a >0$, and $z \to \frac{1}{z}$).

\subsubsection{Elliptic case} 
It remains now to consider the case where all the elements of $G$ are elliptic, the goal here is to prove that $G$ is finite.  Let $\bar{G}$ be the closure of $G$ and $\bar{G}^0$ its identity component.  Thus  $\bar{G}^0$ is a connected 
subgroup of $\SL_2(\R)$.  It can not be $\SL_2(\R)$ since the set of elliptic elements there is not dense. 
The  4 others possibilities for  non-trivial connected subgroups are, up to conjugacy:  the affine group $\Aff(\R)$ 
(upper triangular elements of $\SL_2(\R)$ or a one parameter of hyperbolic, parabolic or elliptic type. But, the set of elliptic elements is dense (actually just non-trivial) only in the case of an elliptic one parameter group.  Hence, if non-trivial, $\bar{G}^0$ is conjugate to $\SO(2)$. The group $G$ itself is contained in the normalizer of $\bar{G}^0$ which also 
equals $\SO(2)$.   We will now find a contradiction leading to that this situation is impossible.  Indeed, since $G$ is dense in $\SO(2)$, its orbits in 
$\mathbb P^1(\R)$ are dense, and hence any  $G$-invariant closed set in $\mathbb P^1(\R)$
equals $\mathbb P^1(\R)$.  This implies that $\mathcal D^\C \cap \mathbb P^1(\R) = \emptyset$, since this  closed $G$-invariant subset that does not contain $\infty$. In sum, the spectrum of $K_f$ is 
nowhere real. As above, by \cite{Bol-Mat_2}, this implies $K_f$ has a constant spectrum and hence $f$ is affine, but we have already excluded this possibility.

Let us consider now the case where $\bar{G^0} = 1$  which means that $G$ is discrete. Any element $A \in G$
is elliptic and hence conjugate to an element of $\SO(2)$ which has a finite order (by discreetness).   Apply   Selberg Lemma (see for instance \cite{Roger}), which says that a finitely generated subgroup
of $\GL_n(\R)$ has a torsion free finite index subgroup (i.e. with no elements of finite order). 
Let $G^\prime$ be a finitely generated subgroup of $G$. Since all elements of $G^\prime$ have finite order, Selberg Lemma
implies that $G^\prime $ is finite.  However, a finite  non-trivial  subgroup of $\SL_2(\R)$ is conjugate  to 
a unique one parameter elliptic subgroup (geometrically, it a has a unique fixed point in the hyperbolic plane). Say, if an element $A \in G$, up to conjugacy belongs to 
  $\SO(2)$, then, for any $B \in G$, the group $G^\prime$ generated by $A$ and $B$ must be contained in $\SO(2)$, and therefore $G \subset \SO(2)$. As above, $G$  can not be dense in $\SO(2)$ and is hence finite. 
  
\bigskip

    We have thus proved that in all cases and after neglecting finite index objects, $\rho(\Pro(M, g))$ lies in a  hyperbolic or parabolic one parameter group, 
  which completes the proof of Theorem \ref{pseudo}.
   $ \Box$

\begin{remark} In higher dimensions, i.e. for subgroups of $\SO(1, n), n >2$, it is not longer true that having all its elements elliptic implies
the subgroup is contained in a compact subgroup, 
see \cite{Waterman}

\end{remark}

\begin{remark}  Let $P$ the one parameter group that contains $G$
(up to finite things). 
Then $G$ may be equal to $P$, or    dense (and $\neq P$), or finally  discrete and hence cyclic generated
by a single element. The case $G = P$ means that $M$ has a projective vector field. 
One may  ask if the dense case may happen, that is if $G$ is dense, then
necessarily $G = P$?

\end{remark}

 \end{document}